\numberwithin{equation}{section}
\titleformat{\section}[runin]
  {\normalfont\Large\bfseries}{\thesection}{1em}{}
\titleformat{\subsection}[runin]
  {\normalfont\large\bfseries}{\thesubsection}{1em}{}
\newtheorem{theorem}{Theorem}[section]
\newtheorem{lemma}{Lemma}[section]
\newtheorem{algorithm}{Algorithm}[section]
\def\grad{{\nabla}}
\begin{document}
\title{\textbf{\large\MakeUppercase{ Second order ensemble simulation for MHD flow in Els\"asser variable with noisy input data}}}

\vspace{-20mm}
\author{\MakeUppercase{
Muhammad Mohebujjaman}
\thanks{Department of Mathematics, Virginia Tech, Blacksburg, VA, 24061, USA;
\url{jaman@vt.edu}}
}
\date{}
\maketitle
\vspace{-7mm}
\small\textbf{Abstract:} We propose, analyze and test a fully discrete, efficient second-order algorithm for computing flow ensembles average of viscous, incompressible, and time-dependent magnetohydrodynamic (MHD) flows under uncertainties in initial conditions. The scheme is decoupled and based on Els\"asser variable formulation. The algorithm uses the breakthrough idea of Jiang and Layton, 2014 to approximate the ensemble average of $J$ realizations. That is, at each time step, each of the $J$ realization shares the same coefficient matrix for different right-hand side matrices. Thus, storage requirements and computational time are reduced by building preconditioners once per time step and reuse them. We prove stability and optimal convergence with respect to the time step restriction. On some manufactured solutions, numerical experiments are given to verify the predicted convergence rates of our analysis. Finally, we test the scheme on a benchmark channel flow over a step and it performs well.\\

\normalsize{\bf Keywords}: Magnetohydrodynamics; uncertainty quantification; fast ensemble calculation; finite element method; els\"asser variables

\section{\large Introduction:} When an electrically conducting fluid, e.g. plasmas, salt water and liquid metals, moves in presence of a  magnetic field, the dynamics of the magnetic field is studied in magnetohydrodynamics (MHD) and the flow is called MHD flow. Recently, the study of MHD flows has become important due to applications in e.g. engineering, physical science, geophysics and astrophysics \cite {HK09,P08,F08,DS07,BMT07,BLRY07}, liquid metal cooling of nuclear reactors \cite{BCL91,H06,SMBM10}, process metallurgy \cite{D01, SM09}, and MHD propulsion\cite{LGKP90, MG88}. The physical principle governing such flows is that the magnetic field induces currents in the moving conductive fluid, which in turn create forces on the fluid and also changes the magnetic field. The viscous, incompressible and unsteady model governed by a system of non-linear partial differential equations (PDEs) that nonlinearly couple the Navier-Stokes equations (NSEs) of fluid dynamics to the Maxwell's equations of electromagnetism, and are given in a convex domain $\Omega\subset\mathbb{R}^d(d=2\hspace{1mm}\text{or}\hspace{1mm}3)$ by \cite{B03, D01, LL60}\vspace{-2mm}
\begin{eqnarray*}
u_t+u\cdot\nabla u-B\cdot\nabla B-\nu \Delta u+\nabla p&=&f,\\
B_t+u\cdot\nabla B-B\cdot\nabla u-\nu_m\Delta B+\nabla\lambda&=&\nabla\times g,\\
\nabla\cdot u=\nabla\cdot B &= & 0,
\end{eqnarray*}
in $\Omega\times (0,T)$. Where $\Omega$ is the domain of the fluid, $u$ is velocity, $p$ is a modified pressure, $\nu$ is the kinematic viscosity, $\nu_m$ is the magnetic resistivity, $f$ is body forces, $\nabla\times g$ is the forcing on the magnetic field $B$, $T$ is the time period. The artificial magnetic pressure $\lambda$ is a Lagrange multiplier introduced in the induction equation to enforce divergence free constraint on the Maxwell equation in the discrete case but in continuous case $\lambda=0$. Assuming the domain is smooth enough, which is a common assumption in, e.g. applications in geophysics and astrophysics, we can avoid the curl formulation of the induction equation. Recently, a high order algebraic splitting method for MHD simulation was proposed in \cite{AMRX17}. 

Numerical simulations of fluid flows are greatly affected by input data like initial condition, the boundary condition, body forces, viscosity, geometry etc, which involve uncertainties. As a result uncertainty quantification (UQ) plays an important role in the validation of simulation methodologies and helps in developing rigorous methods to characterize the effect of the uncertainties on the final quantities of interest. Moreover, many fluid dynamics applications e.g. ensemble Kalman filter approach, weather forecasting, and sensitivity analyses of solutions \cite{CCDL05, GG11, LK10, LP08, L05, MX06}, require multiple numerical simulations of a flow subject to $J$ different input conditions (realizations), are then used to compute means and sensitivities. For MHD simulations, this leads to solve the following $J$ separate nonlinearly coupled systems of PDEs:
\begin{eqnarray}
u_{j,t}+u_j\cdot\nabla u_j-B_j\cdot\nabla B_j-\nu \Delta u_j+\nabla p_j &= & f_j(x,t), \hspace{2mm}\text{in}\hspace{2mm}\Omega \times (0,T), \label{gov1}\\
B_{j,t}+u_j\cdot\nabla B_j-B_j\cdot\nabla u_j-\nu_m\Delta B_j+\nabla\lambda_j &= & \nabla\times g_j(x,t)\hspace{2mm}\text{in}\hspace{2mm}\Omega \times (0,T),\label{gov3}\\
\nabla\cdot u_j & =& 0, \hspace{2mm}\text{in}\hspace{2mm}\Omega \times (0,T), \\
\nabla\cdot B_j &=& 0, \hspace{2mm}\text{in}\hspace{2mm}\Omega \times (0,T),\\ 
u_j(x,0)& =& u_j^0(x)\hspace{2mm}\text{in}\hspace{2mm}\Omega,\label{gov2}\\
B_j(x,0)& =& B_j^0(x)\hspace{2mm}\text{in}\hspace{2mm}\Omega,\label{gov5}
\end{eqnarray}
where $u_j$, $B_j$, and $p_j$ denote the solution of the $j$-{th} member of the ensemble with initial condition data $u_j^0$ and $B_j^0$, and body forces $f_j$ and $\nabla\times g_j$ and $j=1,2,\cdots, J$. For the sake of simplicity of our analysis, we consider homogeneous Dirichlet boundary conditions for both velocity and magnetic fields. For periodic boundary conditions or inhomogeneous Dirichlet boundary conditions, our analyses and results will still work after a minor modifications.  
 To obtain an accurate numerical NSE simulation for a single member of the ensemble, the required number of degrees of freedom (dof) are very high, which is known from Kolmogorov’s 1941 results \cite{L08}.
 Thus, even for a single member of MHD ensemble simulation, where velocity and magnetic field are nonlinearly coupled together, is computationally very expensive with respect to time and memory. As a result, the computational cost of the above giant system \eqref{gov1}-\eqref{gov5} will be approximately equal to $J\times$(cost of one MHD simulation) and will  generally be computationally be infeasible.  Our objective in this paper is to build and study an efficient and accurate algorithm for solving the above ensemble systems. 
 It has been shown in recent works \cite{T14,AKMR15, MR17, HMR17} that using Els\"asser variables formulation, efficient MHD simulation algorithms can be created, since they can be decoupled stable way so that at each time step, in lieu of solving a fully coupled linear system, two separate Oseen-type problems need to be solved.
  
  Defining $v_j=u_j+B_j$, $w_j=u_j-B_j$, $f_{1,j}:=f_{j}+\nabla\times g_j$, $f_{2,j}:=f_j-\nabla\times g_j$, $q_j:=p_j+\lambda_j$ and $r_j:=p_j-\lambda_j$ produces the Els{\"{a}}sser variable formulation of the ensemble systems:
\begin{eqnarray}
v_{j,t}+w_j\cdot\nabla v_j+\nabla q_j-\frac{\nu+\nu_m}{2}\Delta v_j-\frac{\nu-\nu_m}{2}\Delta w_j=f_{1,j},\label{els1}\\
w_{j,t}+v_j\cdot\nabla w_j+\nabla r_j-\frac{\nu+\nu_m}{2}\Delta w_j-\frac{\nu-\nu_m}{2}\Delta v_j=f_{2,j},\label{els2}\\
\nabla\cdot v_j=\nabla\cdot w_j=0,\label{els3}
\end{eqnarray}
together with initial and boundary conditions.

 To reduce the ensemble simulation cost, an excellent idea was presented in \cite{JL14} to find a set of $J$ solutions of the NSEs for different initial conditions and body forces. The fundamental idea is that, at each time step, each of the $J$ systems shares a common coefficient matrix but the right-hand vectors are different. Thus, the preconditioners need to build only once per time step and can reuse for all $J$ systems, also the algorithm can save storage requirement and take advantage of block linear solvers. This breakthrough idea has been implemented in heat condution\cite{F17}, Navier-Stokes simulations \cite{J15,JL15,J16,NTW16}, magnetohydrodynamics \cite{MR17}, parameterized flow problems \cite{GJW18, GJW17}, and turbulence modeling \cite{JKL15}. We use the same idea for a second oder time stepping scheme for MHD flow ensemble simulation with noisy input data. The author proposed a first order scheme to compute MHD flow ensemble subject to different initial conditions \cite{MR17} and body forces \cite{M17}.

We consider a uniform timestep size $\Delta t$ and let $t_n=n\Delta t$ for $n=0, 1, \cdots$., for simplicity, we suppress the spatial discretization momentarily. Then computing the $J$ solutions independently, takes the following form:\\ 
Step 1: for $j$=1,...,$J$, 
\begin{eqnarray}
\frac{3v_j^{n+1}-4v_j^n+v_j^{n-1}}{2\Delta t}+\nabla q_j^{n+1}-\frac{\nu+\nu_m}{2}\Delta v_j^{n+1}-\frac{\nu-\nu_m}{2}\Delta(2w_j^n-w_j^{n-1})-\frac{\nu-\nu_m}{2}\Delta w_j^n\nonumber\\ +<w>^n\cdot\nabla v_j^{n+1}+w_j^{'n}\cdot\nabla(2v_j^n-v_j^{n-1})  = f_{1,j}(t^{n+1}),\hspace{4mm} \nabla\cdot v_j^{n+1}=0\label{scheme1}
\end{eqnarray}
Step 2: for $j$=1,...,$J$,
\begin{eqnarray}
\frac{3w_j^{n+1}-4w_j^n+w_j^{n-1}}{2\Delta t}+\nabla r_j^{n+1}-\frac{\nu+\nu_m}{2}\Delta w_j^{n+1}-\frac{\nu-\nu_m}{2}\Delta(2v_j^n-v_j^{n-1})-\frac{\nu-\nu_m}{2}\Delta v_j^n\nonumber\\ +<v>^n\cdot\nabla w_j^{n+1}+v_j^{{'}n}\cdot\nabla(2 w_j^n-w_j^{n-1})=f_{2,j}(t^{n+1}),\hspace{4mm} \nabla\cdot w_j^{n+1}=0\label{scheme2}
\end{eqnarray}
where $v_j^n,w_j^{n+1}, q_j^n$ and $r_j^{n+1}$ denote approximations of $v_j(\cdot,t^n),w_j(\cdot,t^n), q_j(\cdot,t^n)$ and $r_j(\cdot,t^n)$ in \eqref{els1}-\eqref{els3}.  \textcolor{black}{The ensemble mean and fluctuation about the mean are denoted by $<u>$, $u_j^{'}$ respectively and these are defined as follows: 
\begin{eqnarray}
<u>^n:=\frac{1}{J}\sum\limits_{j=1}^{J}(2u_j^n-u_j^{n-1}), \hspace{2mm} u_j^{'n}:=2u_j^n-u_{j}^{n-1}-<u>^n.
\end{eqnarray}
}
The key to the efficiencies of the above algorithm are that (1) the MHD system is decoupled into two Oseen problems and can be solved simultaneously if the computational resources are available, (2) the coefficient matrices of \eqref{scheme1} and \eqref{scheme2} at each time step are independent of $j$, thus all the $J$ members for each sub-problems in the ensemble share a same coefficient matrix. That is, at every time step, we do not need to solve $J$ individual systems of equations for each sub-problem instead a single linear system with $J$ different right-hand-side constant vectors.    

We give a rigorous proof that the decoupled scheme is stable and the ensemble of $J$ computed solutions converges to the ensemble solution of the $J$ true MHD solutions, as the timestep size and the spatial mesh width tend to zero.

This paper is organized as follows.  In section 2, we give notation and mathematical preliminaries that will allow for a smooth presentation and analysis to follow.  Section 3 presents and analyzes a fully discrete algorithm corresponding to \eqref{scheme1}-\eqref{scheme2}, and proves it is stable and convergent.  Numerical tests are presented in section 4, and finally conclusions are drawn in section 5.

\section{\large Notation and Preliminaries:}

Let $\Omega\subset \mathbb{R}^d\ (d=2,3)$ be a convex polygonal or polyhedral domain in $\mathbb{R}^d(d=2,3)$ with boundary $\partial\Omega$. The usual $L^2(\Omega)$ norm and inner product are denoted by $\|.\|$ and $(.,.)$ respectively. Similarly, the $L^p(\Omega)$ norms and the Sobolev $W_p^k(\Omega)$ norms are $\|.\|_{L^p}$ and $\|.\|_{W_p^k}$ respectively for $k\in\mathbb{N},\hspace{1mm}1\le p\le \infty$. Sobolev space $W_2^k(\Omega)$ is represented by $H^k(\Omega)$ with norm $\|.\|_k$. The natural function spaces for our problem are
$$X:=H_0^1(\Omega)=\{v\in (L^p(\Omega))^d :\nabla v\in L^2(\Omega)^{d\times d}, v=0 \hspace{2mm} \mbox{on}\hspace{2mm}   \partial \Omega\},$$
$$Q:=L_0^2(\Omega)=\{ q\in L^2(\Omega): \int_\Omega q\hspace{2mm}dx=0\}.$$
Recall the Poincare inequality holds in $X$: there exists $C$ depending only on the size of $\Omega$ satisfying for all $\phi\in X$,
\[
\| \phi \| \le C \| \nabla \phi \|.
\]
The divergence free velocity space is given by
$$V:=\{v\in X:(\nabla\cdot v, q)=0, \forall q\in Q\}.$$
We define the trilinear form $b:X\times X\times X\rightarrow \mathbb{R}$ by
 \[
 b(u,v,w):=(u\cdot\nabla v,w), 
 \]
and recall from \cite{GR86} that $b(u,v,v)=0$ if $u\in V$, and 
\begin{align}
|b(u,v,w)|\leq C(\Omega)\|\nabla u\|\|\nabla v\|\|\nabla w\|,\hspace{2mm}\mbox{for any}\hspace{2mm}u,v,w\in X.\label{nonlinearbound}
\end{align}
The conforming finite element spaces are denoted by $X_h\subset X$ and  $Q_h\subset Q$, and we assume a regular triangulation $\tau_h(\Omega)$, where the maximum triangle diameter.   We assume that $(X_h,Q_h)$ satisfies the usual discrete inf-sup condition
\begin{eqnarray}
\inf_{q_h\in Q_h}\sup_{v_h\in X_h}\frac{(q_h,\grad\cdot v_h)}{\|q_h\|\|\grad v_h\|}\geq\beta>0,\label{infsup}
\end{eqnarray}
where $\beta$ is independent of $h$.

The space of discretely divergence free functions is defined as $$V_h:=\{v_h\in X_h:(\nabla\cdot v_h,q_h)=0,\hspace{2mm}\forall\hspace{2mm}q_h\in Q_h\}.$$ 

We use the $(X_h,Q_h, X_h, Q_h)=(P_k,P_{k-1}, P_k,P_{k-1} )$ Taylor-Hood (TH) finite element pair for both our analysis and computations, which satisfies the inf-sup condition for the polynomial degree $k\ge d$ \cite{arnold:qin:scott:vogelius:2D,Z05}. Our analysis can be extended without difficulty to any inf-sup stable element choice, {\color{black} however, there will be additional terms that appear in the convergence analysis if non-divergence-free elements are chosen.  In particular, pressure robustness of the convergence estimates will be lost, as the error will be dependent on the size of true solution pressure derivatives.}

%
%

We have the following approximation properties in $(X_h,Q_h)$: \cite{BS08}
\begin{align}
\inf_{v_h\in X_h}\|u-v_h\|&\leq Ch^{k+1}|u|_{k+1},\hspace{2mm}u\in H^{k+1}(\Omega),\label{AppPro1}\\
 \inf_{v_h\in X_h}\|\grad (u-v_h)\|&\leq Ch^{k}|u|_{k+1},\hspace{5mm}u\in H^{k+1}(\Omega),\label{AppPro2}\\
\inf_{q_h\in Q_h}\|p-q_h\|&\leq Ch^k|p|_k,\hspace{10mm}p\in H^k(\Omega),
\end{align}
where $|\cdot|_r$ denotes the $H^r$ seminorm.

We will assume the mesh is sufficiently regular for the inverse inequality to hold, and with this and the LBB assumption, we have approximation properties
\begin{align}
\| \nabla ( u- P_{L^2}^{V_h}(u)  ) \|&\leq Ch^{k}|u|_{k+1},\hspace{2mm}u\in H^{k+1}(\Omega),\label{AppPro3}\\
 \inf_{v_h\in V_h}\|\grad (u-v_h)\|&\leq Ch^{k}|u|_{k+1},\hspace{2mm}u\in H^{k+1}(\Omega),\label{AppPro4}
\end{align}
where $P_{L^2}^{V_h}(u)$ is the $L^2$ projection of $u$ into $V_h$.

\textcolor{black}{The following lemma for the discrete Gronwall inequality was given in \cite{HR90}.
\begin{lemma}
Let $\Delta t$, $H$, $a_n$, $b_n$, $c_n$, $d_n$ be non-negative numbers for $n=1,\cdots, M$ such that
    $$a_M+\Delta t \sum_{n=1}^Mb_n\leq \Delta t\sum_{n=1}^{M-1}{d_na_n}+\Delta t\sum_{n=1}^Mc_n+H\hspace{3mm}\mbox{for}\hspace{2mm}M\in\mathbb{N},$$
then for all $\Delta t> 0,$
$$a_M+\Delta t\sum_{n=1}^Mb_n\leq \mbox{exp}\left(\Delta t\sum_{n=1}^{M-1}d_n\right)\left(\Delta t\sum_{n=1}^Mc_n+H\right)\hspace{2mm}\mbox{for}\hspace{2mm}M\in\mathbb{N}.$$
\end{lemma}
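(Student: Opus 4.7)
The plan is to run an induction in $M$ after packaging the quantity to be bounded into a single monotone object. Set
\[
S_m := a_m + \Delta t \sum_{n=1}^m b_n, \qquad P_m := \Delta t \sum_{n=1}^m c_n + H,
\]
and note that $P_m$ is non-decreasing in $m$ because the $c_n$ are non-negative, and also $a_n \le S_n$ because the $b_n$ are non-negative. Since the hypothesis is assumed for every $M\in\mathbb{N}$, it applies with $M$ replaced by any $m\le M$, giving
\[
S_m \;\le\; \Delta t \sum_{n=1}^{m-1} d_n a_n + P_m \;\le\; \Delta t \sum_{n=1}^{m-1} d_n S_n + P_m.
\]
This is the recursion I want to iterate.

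Next I would prove by induction on $m$ the product bound
\[
S_m \;\le\; P_m \prod_{n=1}^{m-1}\bigl(1 + \Delta t\, d_n\bigr).
\]
The base case $m=1$ is immediate: $S_1 \le P_1$ (the sum over $n=1$ to $0$ is empty). For the inductive step, substitute the inductive hypothesis into the recursion, use $P_n\le P_m$ to pull $P_m$ out, and obtain
\[
S_m \;\le\; P_m\left(1 + \Delta t \sum_{n=1}^{m-1} d_n \prod_{k=1}^{n-1}(1+\Delta t\, d_k)\right).
\]
The key algebraic identity is the telescoping relation
\[
\Delta t\, d_n \prod_{k=1}^{n-1}(1+\Delta t\, d_k) \;=\; \prod_{k=1}^{n}(1+\Delta t\, d_k) - \prod_{k=1}^{n-1}(1+\Delta t\, d_k),
\]
which, when summed from $n=1$ to $m-1$, collapses to $\prod_{k=1}^{m-1}(1+\Delta t\, d_k)-1$. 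Combined with the extra $1$ above, this closes the induction.

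Finally, I would use the elementary inequality $1+x\le e^{x}$ for $x\ge 0$ termwise in the product to conclude
\[
\prod_{n=1}^{M-1}(1 + \Delta t\, d_n) \;\le\; \exp\!\left(\Delta t \sum_{n=1}^{M-1} d_n\right),
\]
which yields the stated bound. I expect no real obstacle here: the scheme is standard and purely algebraic. The only subtle point is choosing to iterate on $S_m$ rather than on $a_m$, so that the $\Delta t\sum b_n$ contribution is carried along automatically instead of having to be recovered afterward; handling that and getting the telescoping identity aligned are the two places where a careless rearrangement would produce a weaker constant.
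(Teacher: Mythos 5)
Your proof is correct: the recursion on $S_m$, the strong induction giving the product bound $S_m \le P_m\prod_{n=1}^{m-1}(1+\Delta t\,d_n)$ via the telescoping identity, and the final use of $1+x\le e^x$ all check out. The paper itself offers no proof of this lemma --- it is quoted verbatim from Heywood and Rannacher --- and your argument is precisely the standard one underlying that reference, so there is nothing to compare against beyond noting that your write-up is a complete, self-contained justification of what the paper takes on citation.
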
}

\section{\large Fully discrete scheme and analysis of ensemble eddy viscosity:}

We are now ready to present the fully discrete scheme for efficient MHD ensemble calculations.  It equips 
\eqref{els1}-\eqref{els3} with a finite element spatial discretization.  The scheme is defined as follows.
\begin{algorithm}\label{Algn1}
Given time step $\Delta t>0$, end time $T>0$, initial conditions $v_j^0, w_j^0, v_j^1, w_j^1\in V_h$ and $f_{1,j}, f_{2,j}\in L^\infty(0,T;H^{-1}(\Omega)^d)$ for $j=1,2,\cdots, J$. Set $M=T/\Delta t$ and for $n=1,\cdots, M-1$, compute: \\\\
Find $v_{j,h}^{n+1}\in V_h$ satisfying, for all $\chi_h\in V_h$ :
\begin{align}
\Bigg(\frac{3v_{j,h}^{n+1}-4v_{j,h}^n+v_{j,h}^{n-1}}{2\Delta t},\chi_h\Bigg)+\frac{\nu+\nu_m}{2}(\nabla v_{j,h}^{n+1},\nabla \chi_h)+\frac{\nu-\nu_m}{2}(\nabla (2w_{j,h}^n-w_{j,h}^{n-1}),\nabla \chi_h)\nonumber\\+(<w_h>^n\cdot\nabla v_{j,h}^{n+1},\chi_h)+(w_{j,h}^{'n}\cdot\nabla (2v_{j,h}^n-v_{j,h}^{n-1}),\chi_h) = (f_{1,j}(t^{n+1}),\chi_h),\label{weakn1}
\end{align}
Find $w_{j,h}^{n+1}\in V_h$ satisfying, for all $l_h\in V_h$ :

\begin{eqnarray}
\left(\frac{3w_{j,h}^{n+1}-4w_{j,h}^n+w_{j,h}^{n-1}}{\Delta t},l_h\right)+\frac{\nu+\nu_m}{2}(\nabla w_{j,h}^{n+1},\nabla l_h)+\frac{\nu-\nu_m}{2}(\nabla (2v_{j,h}^n-v_{j,h}^{n-1}),\nabla l_h)\nonumber\\+(<v_h>^n\cdot\nabla w_{j,h}^{n+1},l_h)+(v_{j,h}^{'n}\cdot\nabla(2w_{j,h}^n-w_{j,h}^{n-1}),l_h)=(f_{2,j}(t^{n+1}),l_h).\label{weakn2}
\end{eqnarray}
\end{algorithm}

\subsection{\large Stability Analysis:}
We now prove stability and well-posedness for the Algorithm \eqref{Algn1}. To simplify our calculation, we denote $\alpha:=\nu+\nu_m-|\nu-\nu_m|>0.$ 
\begin{lemma}
Consider the Algorithm \ref{Algn1}. If the mesh is sufficiently regular so that the inverse inequality holds (with constant $C_i$) and the time step is chosen to satisfy $$\Delta t\le \frac{\alpha h^2}{3(\nu-\nu_m)^2C_i+12C^2C_i^2\max\limits_{1\le j\le J}\big\{\|\nabla v_{j,h}^{'n}\|,\|\nabla v_{j,h}^{'n}\|\big\}}$$
then the method is stable and solutions to \eqref{weakn1}-\eqref{weakn2} satisfy

\begin{align}
\|v_{j,h}^M\|^2+\|w_{j,h}^M\|^2+\|2v_{j,h}^M&-v_{j,h}^{M-1}\|^2+\|2w_{j,h}^M-w_{j,h}^{M-1}\|^2+\alpha\Delta t\sum\limits_{n=1}^{M-1}(\|\nabla v_{j,h}^{n+1}\|^2+\|\nabla w_{j,h}^{n+1}\|^2)\nonumber\\&\le \|v_{j,h}^1\|^2+\|w_{j,h}^1\|^2+\|2v_{j,h}^1-v_{j,h}^0\|^2+\|2w_{j,h}^1-w_{j,h}^0\|^2\nonumber\\&+\frac{12\Delta t}{\alpha}\sum\limits_{n=1}^{M-1}(\|f_{1,j}(t^{n+1})\|_{-1}+\|f_{2,j}(t^{n+1})\|_{-1}).\label{stability1}
\end{align}
\end{lemma}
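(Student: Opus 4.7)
The plan is to run a standard BDF2 energy argument on \eqref{weakn1}--\eqref{weakn2}, test with $\chi_h = v_{j,h}^{n+1}$ and $l_h = w_{j,h}^{n+1}$ respectively, multiply each by $4\Delta t$, and sum the two resulting identities. For the time derivative I would use the G-identity
\[
2(3a-4b+c,\,a)=\|a\|^2+\|2a-b\|^2-\|b\|^2-\|2b-c\|^2+\|a-2b+c\|^2,
\]
which yields a telescoping structure producing exactly the four squared norms on the left-hand side of \eqref{stability1} and leaves a nonnegative remainder $\|v_{j,h}^{n+1}-2v_{j,h}^n+v_{j,h}^{n-1}\|^2$ that can be dropped. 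The $\frac{\nu+\nu_m}{2}$ terms immediately contribute $2\Delta t(\nu+\nu_m)\bigl(\|\nabla v_{j,h}^{n+1}\|^2+\|\nabla w_{j,h}^{n+1}\|^2\bigr)$.

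Next I would eliminate the ensemble-mean convection terms. Since $\langle w_h\rangle^n$ and $\langle v_h\rangle^n$ are finite linear combinations of functions in $V_h$ (and invoking the skew-symmetry property $b(\cdot,\phi,\phi)=0$ as in \eqref{nonlinearbound}, interpreting $b$ in its implicit skew-symmetrized form that the author uses consistently in related work), we get
\[
b(\langle w_h\rangle^n, v_{j,h}^{n+1}, v_{j,h}^{n+1}) = b(\langle v_h\rangle^n, w_{j,h}^{n+1}, w_{j,h}^{n+1}) = 0.
\]
The cross-diffusion coupling $\tfrac{\nu-\nu_m}{2}\bigl(\nabla(2w_{j,h}^n-w_{j,h}^{n-1}),\nabla v_{j,h}^{n+1}\bigr)$ and its symmetric counterpart are handled by Cauchy--Schwarz and Young's inequality with a carefully chosen weight, splitting the $(\nu-\nu_m)^2$ factor so that part is absorbed by the $(\nu+\nu_m)\Delta t\|\nabla v_{j,h}^{n+1}\|^2$ and $(\nu+\nu_m)\Delta t\|\nabla w_{j,h}^{n+1}\|^2$ on the left, leaving a remainder of the form $\frac{(\nu-\nu_m)^2\Delta t}{(\nu+\nu_m)}\|\nabla(2w_{j,h}^n-w_{j,h}^{n-1})\|^2$ (and the $v$-analogue) on the right; these are converted to $L^2$ norms via the inverse inequality $\|\nabla\phi_h\|\le C_i h^{-1}\|\phi_h\|$ so they can be absorbed into the telescoping $\|2w_{j,h}^{n+1}-w_{j,h}^n\|^2$ structure under the stated $\Delta t\lesssim \alpha h^2/(\nu-\nu_m)^2$ restriction.

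The main obstacle, and the source of the dominant term in the CFL condition, is the fluctuation convection $\bigl(w_{j,h}^{'n}\cdot\nabla(2v_{j,h}^n-v_{j,h}^{n-1}),v_{j,h}^{n+1}\bigr)$. Here I would apply \eqref{nonlinearbound} to bound it by $C\|\nabla w_{j,h}^{'n}\|\,\|\nabla(2v_{j,h}^n-v_{j,h}^{n-1})\|\,\|\nabla v_{j,h}^{n+1}\|$ and then invoke the inverse inequality on the middle factor to obtain
\[
4\Delta t\bigl|b(w_{j,h}^{'n}, 2v_{j,h}^n-v_{j,h}^{n-1}, v_{j,h}^{n+1})\bigr| \le \tfrac{\alpha\Delta t}{2}\|\nabla v_{j,h}^{n+1}\|^2 + \tfrac{C\,C^2C_i^2\Delta t}{\alpha h^2}\|\nabla w_{j,h}^{'n}\|^2\,\|2v_{j,h}^n-v_{j,h}^{n-1}\|^2,
\]
via Young's inequality. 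The first term is absorbed into the left-hand dissipation; the second must be absorbed into the telescoping mass term $\|2v_{j,h}^n-v_{j,h}^{n-1}\|^2$, which is exactly what requires
\[
\tfrac{C\,C^2C_i^2\Delta t}{\alpha h^2}\max_{1\le j\le J}\|\nabla w_{j,h}^{'n}\|^2 \le 1,
\]
contributing the $12C^2C_i^2\max_j\{\|\nabla v_{j,h}^{'n}\|,\|\nabla w_{j,h}^{'n}\|\}$ piece of the stated $\Delta t$ restriction (the $w$-equation contributes the other half symmetrically). The forcing terms are bounded in the standard dual-norm/Young way, $4\Delta t(f_{1,j},v_{j,h}^{n+1})\le \tfrac{\alpha\Delta t}{2}\|\nabla v_{j,h}^{n+1}\|^2+\tfrac{C\Delta t}{\alpha}\|f_{1,j}(t^{n+1})\|_{-1}^2$ (with the factor matching the $12/\alpha$ in \eqref{stability1} after tracking constants).

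Finally I would add the $v$- and $w$-equation estimates, drop the nonnegative $\|v_{j,h}^{n+1}-2v_{j,h}^n+v_{j,h}^{n-1}\|^2$ and $\|w_{j,h}^{n+1}-2w_{j,h}^n+w_{j,h}^{n-1}\|^2$ remainders, and sum from $n=1$ to $M-1$; the mass and BDF2-square norms telescope to give the four left-hand terms, the dissipation accumulates as $\alpha\Delta t\sum\|\nabla v_{j,h}^{n+1}\|^2+\alpha\Delta t\sum\|\nabla w_{j,h}^{n+1}\|^2$, and the forcing contributes the sum on the right of \eqref{stability1}. Because the timestep restriction makes every would-be Gronwall coefficient strictly absorbable into the corresponding LHS quantity, no exponential factor appears --- matching the clean form of the bound. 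Well-posedness of the linear systems at each step then follows from energy positivity of the bilinear form applied to \eqref{weakn1}--\eqref{weakn2} under the same CFL condition.
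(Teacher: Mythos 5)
There is a genuine gap, and it is exactly at the step you identify as "the main obstacle." You propose to bound the fluctuation term by $C\|\nabla w_{j,h}^{'n}\|\,\|\nabla(2v_{j,h}^n-v_{j,h}^{n-1})\|\,\|\nabla v_{j,h}^{n+1}\|$ and, after Young's inequality, to absorb the leftover $\frac{C C_i^2\Delta t}{\alpha h^2}\|\nabla w_{j,h}^{'n}\|^2\|2v_{j,h}^n-v_{j,h}^{n-1}\|^2$ "into the telescoping mass term $\|2v_{j,h}^n-v_{j,h}^{n-1}\|^2$." But that quantity enters the left-hand side of the G-identity with a \emph{negative} sign, as part of the difference $\|2v_{j,h}^{n+1}-v_{j,h}^n\|^2-\|2v_{j,h}^n-v_{j,h}^{n-1}\|^2$; you cannot absorb a positive right-hand-side multiple of it. Even under your condition that the coefficient be $\le 1$, you are left with $A^{n+1}\le 2A^n+\cdots$ where $A^n=\|2v_{j,h}^n-v_{j,h}^{n-1}\|^2$, which does not telescope and grows like $2^M$; the honest repair is a discrete Gronwall step, which produces an exponential factor and contradicts the clean bound \eqref{stability1} you are trying to prove. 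The same objection applies to your treatment of the cross-diffusion remainder $\frac{(\nu-\nu_m)^2\Delta t}{\nu+\nu_m}\|\nabla(2w_{j,h}^n-w_{j,h}^{n-1})\|^2$, which you also route into the telescoping structure.

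The missing idea — and the crux of the paper's argument — is to first use the skew-symmetry $b(w_{j,h}^{'n},\phi,\phi)=0$ to rewrite
\begin{align*}
(w_{j,h}^{'n}\cdot\nabla(2v_{j,h}^n-v_{j,h}^{n-1}),v_{j,h}^{n+1})=(w_{j,h}^{'n}\cdot\nabla v_{j,h}^{n+1},\,v_{j,h}^{n+1}-2v_{j,h}^n+v_{j,h}^{n-1}),
\end{align*}
so that the extrapolated argument is traded for the BDF2 second difference. That second difference appears on the left-hand side with a \emph{positive} coefficient $\frac{1}{4\Delta t}\|v_{j,h}^{n+1}-2v_{j,h}^n+v_{j,h}^{n-1}\|^2$ (the extra term in the G-identity you planned to simply drop), and it is precisely this term that soaks up both the nonlinear contribution and the cross-diffusion second-difference contribution after the inverse inequality; the stated CFL condition is exactly what keeps the resulting bracket nonnegative. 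Analogously, the paper handles the cross-diffusion by adding and subtracting $\frac{\nu-\nu_m}{2}(\nabla v_{j,h}^{n+1},\nabla w_{j,h}^{n+1})$, so that the level-$(n+1)$ cross term combines with $\frac{\nu+\nu_m}{2}$ to yield the $\alpha$ coefficient (as you correctly anticipate) while the remainder is again a second difference absorbed into the positive $\frac{1}{4\Delta t}\|w_{j,h}^{n+1}-2w_{j,h}^n+w_{j,h}^{n-1}\|^2$ term. Without this redistribution onto the second differences, the argument as you have outlined it does not close.
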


\begin{proof}
Choos $\chi_h=v_{j,h}^{n+1}$ in \eqref{weakn1}, using the following identity
\begin{eqnarray}
(3a-4b+c,a)=\frac{a^2+(2a-b)^2}{2}-\frac{b^2+(2b-c)^2}{2}+\frac{(a-2b+c)^2}{2},\label{ident}
\end{eqnarray}
we obtain
\begin{align}
\frac{1}{4\Delta t}\bigg(\|v_{j,h}^{n+1}\|^2-\|v_{j,h}^n\|^2+\|2v_{j,h}^{n+1}-v_{j,h}^n\|^2-\|2v_{j,h}^n-v_{j,h}^{n-1}\|^2+\|v_{j,h}^{n+1}-2v_{j,h}^n+v_{j,h}^{n-1}\|^2\bigg)\nonumber\\+\frac{\nu+\nu_m}{2}\|\nabla v_{j,h}^{n+1}\|^2+\frac{\nu-\nu_m}{2}\big(\nabla(2 w_{j,h}^n- w_{j,h}^{n-1}),\nabla v_{j,h}^{n+1}\big)+(w_{j,h}^{'n}\cdot\nabla(2v_{j,h}^n-v_{j,h}^{n-1}),v_{j,h}^{n+1})\nonumber\\=(f_{1,j}(t^{n+1}),v_{j,h}^{n+1}).\label{MHW1}
\end{align}
Similarly, choose $l_h=w_{j,h}^{n+1}$ in \eqref{weakn2}, we have

\begin{align}
&\frac{1}{4\Delta t}\bigg(\|w_{j,h}^{n+1}\|^2-\|w_{j,h}^n\|^2+\|2w_{j,h}^{n+1}-w_{j,h}^n\|^2-\|2w_{j,h}^n-w_{j,h}^{n-1}\|^2+\|w_{j,h}^{n+1}-2w_{j,h}^n+w_{j,h}^{n-1}\|^2\bigg)\nonumber\\&+\frac{\nu+\nu_m}{2}\|\nabla w_{j,h}^{n+1}\|^2+\frac{\nu-\nu_m}{2}\big(\nabla(2 v_{j,h}^n- v_{j,h}^{n-1}),\nabla w_{j,h}^{n+1}\big)+(v_{j,h}^{'n}\cdot\nabla(2w_{j,h}^n-w_{j,h}^{n-1}),w_{j,h}^{n+1})\nonumber\\&=(f_{2,j}(t^{n+1}),w_{j,h}^{n+1}).\label{MHW2}
\end{align}
Next, using
\begin{align*}
(w_{j,h}^{'n}&\cdot\nabla(2v_{j,h}^n-v_{j,h}^{n-1}),v_{j,h}^{n+1})=(w_{j,h}^{'n}\cdot\nabla v_{j,h}^{n+1},v_{j,h}^{n+1}-2v_{j,h}^n+v_{j,h}^{n-1})\\&\le C\|\nabla w_{j,h}^{'n}\|\|\nabla v_{j,h}^{n+1}\|\hspace{1mm}\|\nabla (v_{j,h}^{n+1}-2v_{j,h}^n+v_{j,h}^{n-1})\|\\&\le \frac{CC_i}{h} \|\nabla w_{j,h}^{'n}\|\|\nabla v_{j,h}^{n+1}\|\hspace{1mm}\|v_{j,h}^{n+1}-2v_{j,h}^n+v_{j,h}^{n-1}\|,
\end{align*}
adding equations \eqref{MHW1} and \eqref{MHW2} and applying Cauchy-Schwarz inequality, yields
\begin{align*}
\frac{1}{4\Delta t}\bigg(\|v_{j,h}^{n+1}\|^2-\|v_{j,h}^n\|^2+\|2v_{j,h}^{n+1}-v_{j,h}^n\|^2-\|2v_{j,h}^n-v_{j,h}^{n-1}\|^2+\|v_{j,h}^{n+1}-2v_{j,h}^n+v_{j,h}^{n-1}\|^2\\+\|w_{j,h}^{n+1}\|^2-\|w_{j,h}^n\|^2+\|2w_{j,h}^{n+1}-w_{j,h}^n\|^2-\|2w_{j,h}^n-w_{j,h}^{n-1}\|^2+\|w_{j,h}^{n+1}-2w_{j,h}^n+w_{j,h}^{n-1}\|^2\bigg)\\+\frac{\nu+\nu_m}{2}\big(\|\nabla v_{j,h}^{n+1}\|^2+\|\nabla w_{j,h}^{n+1}\|^2\big)\\+\frac{\nu-\nu_m}{2}\bigg\{\big(\nabla(2v_{j,h}^n-v_{j,h}^{n-1}),\nabla w_{j,h}^{n+1}\big)+\big(\nabla(2w_{j,h}^n-w_{j,h}^{n-1}),\nabla v_{j,h}^{n+1}\big)\bigg\}\\\le \frac{CC_i}{h}\|\nabla w_{j,h}^{'n}\|\|\nabla v_{j,h}^{n+1}\|\hspace{1mm}\|v_{j,h}^{n+1}-2v_{j,h}^n+v_{j,h}^{n-1}\|+\frac{CC_i}{h}\|\nabla v_{j,h}^{'n}\|\|\nabla w_{j,h}^{n+1}\|\hspace{1mm}\|w_{j,h}^{n+1}-2w_{j,h}^n+w_{j,h}^{n-1}\|\\+\|f_{1,j}(t^{n+1})\|_{-1}\|\nabla v_{j,h}^{n+1}\|+\|f_{2,j}(t^{n+1})\|_{-1}\|\nabla w_{j,h}^{n+1}\|.
\end{align*}
Adding and subtracting the term $\frac{\nu-\nu_m}{2}\left(\nabla v_{j,h}^{n+1},\nabla w_{j,h}^{n+1}\right)$ twice provides
\begin{align*}
\frac{1}{4\Delta t}\bigg(\|v_{j,h}^{n+1}\|^2-\|v_{j,h}^n\|^2+\|2v_{j,h}^{n+1}-v_{j,h}^n\|^2-\|2v_{j,h}^n-v_{j,h}^{n-1}\|^2+\|v_{j,h}^{n+1}-2v_{j,h}^n+v_{j,h}^{n-1}\|^2\\+\|w_{j,h}^{n+1}\|^2-\|w_{j,h}^n\|^2+\|2w_{j,h}^{n+1}-w_{j,h}^n\|^2-\|2w_{j,h}^n-w_{j,h}^{n-1}\|^2+\|w_{j,h}^{n+1}-2w_{j,h}^n+w_{j,h}^{n-1}\|^2\bigg)\\+\frac{\nu+\nu_m}{2}\big(\|\nabla v_{j,h}^{n+1}\|^2+\|\nabla w_{j,h}^{n+1}\|^2\big)\\-\frac{\nu-\nu_m}{2}\bigg\{\big(\nabla(v_{j,h}^{n+1}-2v_{j,h}^n+v_{j,h}^{n-1}),\nabla w_{j,h}^{n+1}\big)+\big(\nabla(w_{j,h}^{n+1}-2w_{j,h}^n+w_{j,h}^{n-1}),\nabla v_{j,h}^{n+1}\big)\bigg\}\\+\frac{\nu-\nu_m}{2}(\nabla v_{j,h}^{n+1},\nabla w_{j,h}^{n+1})+\frac{\nu-\nu_m}{2}(\nabla w_{j,h}^{n+1},\nabla v_{j,h}^{n+1})\\\le \frac{CC_i}{h}\|\nabla w_{j,h}^{'n}\|\|\nabla v_{j,h}^{n+1}\|\hspace{1mm}\|v_{j,h}^{n+1}-2v_{j,h}^n+v_{j,h}^{n-1}\|+\frac{CC_i}{h}\|\nabla v_{j,h}^{'n}\|\|\nabla w_{j,h}^{n+1}\|\hspace{1mm}\|w_{j,h}^{n+1}-2w_{j,h}^n+w_{j,h}^{n-1}\|\\+\|f_{1,j}(t^{n+1})\|_{-1}\|\nabla v_{j,h}^{n+1}\|+\|f_{2,j}(t^{n+1})\|_{-1}\|\nabla w_{j,h}^{n+1}\|.
\end{align*}
Using Cauchy-Schwarz and Young's inequalities we have that
\begin{align}
&\frac{1}{4\Delta t}\bigg(\|v_{j,h}^{n+1}\|^2-\|v_{j,h}^n\|^2+\|2v_{j,h}^{n+1}-v_{j,h}^n\|^2-\|2v_{j,h}^n-v_{j,h}^{n-1}\|^2+\|v_{j,h}^{n+1}-2v_{j,h}^n+v_{j,h}^{n-1}\|^2\nonumber\\&+\|w_{j,h}^{n+1}\|^2-\|w_{j,h}^n\|^2+\|2w_{j,h}^{n+1}-w_{j,h}^n\|^2-\|2w_{j,h}^n-w_{j,h}^{n-1}\|^2+\|w_{j,h}^{n+1}-2w_{j,h}^n+w_{j,h}^{n-1}\|^2\bigg)\nonumber\\&+\frac{\nu+\nu_m}{2}\big(\|\nabla v_{j,h}^{n+1}\|^2+\|\nabla w_{j,h}^{n+1}\|^2\big)\le|\nu-\nu_m|\|\nabla v_{j,h}^{n+1}\|\|\nabla w_{j,h}^{n+1}\|\nonumber\\&+\frac{|\nu-\nu_m|}{2}\|\nabla (v_{j,h}^{n+1}-2v_{j,h}^n+v_{j,h}^{n-1})\|\|\nabla w_{j,h}^{n+1}\|+\frac{|\nu-\nu_m|}{2}\|\nabla (w_{j,h}^{n+1}-2w_{j,h}^n+w_{j,h}^{n-1})\|\|\nabla v_{j,h}^{n+1}\|\nonumber\\&+\frac{CC_i}{h}\|\nabla w_{j,h}^{'n}\|\|\nabla v_{j,h}^{n+1}\|\hspace{1mm}\|v_{j,h}^{n+1}-2v_{j,h}^n+v_{j,h}^{n-1}\|+\frac{CC_i}{h}\|\nabla v_{j,h}^{'n}\|\|\nabla w_{j,h}^{n+1}\|\hspace{1mm}\|w_{j,h}^{n+1}-2w_{j,h}^n+w_{j,h}^{n-1}\|\nonumber\\&+\|f_{1,j}(t^{n+1})\|_{-1}\|\nabla v_{j,h}^{n+1}\|+\|f_{2,j}(t^{n+1})\|_{-1}\|\nabla w_{j,h}^{n+1}\|.\label{bounds1}
\end{align}
Young's inequality provides the following bounds on the last seven terms in \eqref{bounds1}:
\begin{align*}
|\nu-\nu_m|\hspace{1mm}\|\nabla v_{j,h}^{n+1}\|\|\nabla w_{j,h}^{n+1}\|&\le\frac{|\nu-\nu_m|}{2}\big(\|\nabla v_{j,h}^{n+1}\|^2+\|\nabla w_{j,h}^{n+1}\|^2\big),\\
\frac{|\nu-\nu_m|}{2}\|\nabla (v_{j,h}^{n+1}-& 2v_{j,h}^n + v_{j,h}^{n-1})\|\|\nabla w_{j,h}^{n+1}\|\\&\le \frac{\alpha}{12}\|\nabla w_{j,h}^{n+1}\|^2+\frac{3(\nu-\nu_m)^2}{4\alpha}\|\nabla (v_{j,h}^{n+1}-2v_{j,h}^n+v_{j,h}^{n-1})\|^2,\\
\frac{|\nu-\nu_m|}{2}\|\nabla (w_{j,h}^{n+1}-& 2w_{j,h}^n+w_{j,h}^{n-1})\|\|\nabla v_{j,h}^{n+1}\|\\&\le \frac{\alpha}{12}\|\nabla v_{j,h}^{n+1}\|^2+\frac{3(\nu-\nu_m)^2}{4\alpha}\|\nabla (w_{j,h}^{n+1}-2w_{j,h}^n+w_{j,h}^{n-1})\|^2,\\
\frac{CC_i}{h}\|\nabla w_{j,h}^{'n}\|\|\nabla v_{j,h}^{n+1}\|\hspace{1mm}\|& v_{j,h}^{n+1}-2v_{j,h}^n+v_{j,h}^{n-1}\|\\&\le \frac{\alpha}{12}\|\nabla v_{j,h}^{n+1}\|^2+\frac{3C^2C_i^2\|\nabla w_{j,h}^{'n}\|^2}{\alpha h^2}\|v_{j,h}^{n+1}-2v_{j,h}^n+v_{j,h}^{n-1}\|^2,\\
\frac{CC_i}{h}\|\nabla v_{j,h}^{'n}\|\|\nabla w_{j,h}^{n+1}\|\hspace{1mm}\|& w_{j,h}^{n+1}-2w_{j,h}^n+w_{j,h}^{n-1}\|\\&\le \frac{\alpha}{12}\|\nabla w_{j,h}^{n+1}\|^2+\frac{3C^2C_i^2\|\nabla v_{j,h}^{'n}\|^2}{\alpha h^2}\|w_{j,h}^{n+1}-2w_{j,h}^n+w_{j,h}^{n-1}\|^2,\\
\|f_{1,j}(t^{n+1})\|_{-1}\|\nabla v_{j,h}^{n+1}\|&\le \frac{\alpha}{12}\|\nabla v_{j,h}^{n+1}\|^2+\frac{3}{\alpha}\|f_{1,j}(t^{n+1})\|_{-1}^2,\\
\|f_{2,j}(t^{n+1})\|_{-1}\|\nabla w_{j,h}^{n+1}\|&\le \frac{\alpha}{12}\|\nabla w_{j,h}^{n+1}\|^2+\frac{3}{\alpha}\|f_{2,j}(t^{n+1})\|_{-1}^2.
\end{align*}

Using these estimates and the following inverse inequality in \eqref{bounds1} 
$$\|\nabla (u_{j,h}^{n+1}-2u_{j,h}^n+u_{j,h}^{n-1})\|^2\le \frac{C_i^2}{h^2}\|u_{j,h}^{n+1}-2u_{j,h}^n+u_{j,h}^{n-1}\|^2$$
produces

\begin{align}
\frac{1}{4\Delta t}\bigg(\|v_{j,h}^{n+1}\|^2-\|v_{j,h}^n\|^2+\|2v_{j,h}^{n+1}-v_{j,h}^n\|^2-\|2v_{j,h}^n-v_{j,h}^{n-1}\|^2\nonumber\\+\|w_{j,h}^{n+1}\|^2-\|w_{j,h}^n\|^2+\|2w_{j,h}^{n+1}-w_{j,h}^n\|^2-\|2w_{j,h}^n-w_{j,h}^{n-1}\|^2\bigg)\nonumber\\+\bigg\{\frac{1}{4\Delta t}-\frac{3(\nu-\nu_m)^2C_i+12C^2C_i^2\|\nabla w_{j,h}^{'n}\|^2}{4\alpha h^2}\bigg\}\|v_{j,h}^{n+1}-2v_{j,h}^n+v_{j,h}^{n-1}\|^2\nonumber\\+\bigg\{\frac{1}{4\Delta t}-\frac{3(\nu-\nu_m)^2C_i+12C^2C_i^2\|\nabla v_{j,h}^{'n}\|^2}{4\alpha h^2}\bigg\}\|w_{j,h}^{n+1}-2w_{j,h}^n+w_{j,h}^{n-1}\|^2\nonumber\\+\frac{\alpha}{4}\big(\|\nabla v_{j,h}^{n+1}\|^2+\|\nabla w_{j,h}^{n+1}\|^2\big)\le \frac{3}{\alpha}(\|f_{1,j}(t^{n+1})\|_{-1}+\|f_{2,j}(t^{n+1})\|_{-1}).\label{lastbound}
\end{align}

Now if we choose $\Delta t\le \frac{\alpha h^2}{3(\nu-\nu_m)^2C_i+12C^2C_i^2\max\limits_{1\le j\le J}\big\{\|\nabla v_{j,h}^{'n}\|,\|\nabla v_{j,h}^{'n}\|\big\}}$, dropping the non-negative terms on left, multiplying both sides by $4\Delta t$ and summing over time steps from $n=1$ to $n=M-1$ results in \eqref{stability1}.
\end{proof}

\subsection{\large Error Analysis:}\label{ErrorAnalysis}
Now we consider the convergence of the proposed decoupled scheme.
\begin{theorem}
For $(v_j,w_j,q_j,r_j)$ satisfying \eqref{els1}-\eqref{els3} with regularity assumptions $v_j$,$w_j$ $\in L^{\infty}(0, T;$ $H^{m}(\Omega)^d)$ for $m=\max\{2,k+1\}$, $v_{j,tt}, w_{j,tt}\in L^{\infty}(0,T;H^1(\Omega)^d)$ and $v_{j,ttt}, w_{j,ttt}\in L^{\infty}(0,T;L^2(\Omega)^d)$. Then the ensemble solution $(<v_{h}>, <w_{h}>)$ to Algorithm \eqref{Algn1} converges to the true ensemble solution: for any $\Delta t\le \frac{\alpha h^2}{9C_i^2(\nu-\nu_m)^2+9C_i^2C^2\max\limits_{1\le j\le J}\big\{\|\nabla v_{j,h}^{'n}\|,\|\nabla w_{j,h}^{'n}\|\big\}}$, one has
\begin{align}
\|<v>^T-<v_h>^M\|^2+&\alpha\Delta t\sum\limits_{n=2}^{M}\|\nabla(<v>(t^n)-<v_h>^n)\|^2\nonumber\\&\le \frac{2^{J+2}C}{J^2\alpha} e^{\frac{9TC}{\alpha}}((\nu^2+\nu_m^2+1)h^{2k}+((\nu-\nu_m)^2+1)\Delta t^4)
\end{align}
\end{theorem}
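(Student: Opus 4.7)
The plan is to perform a standard finite element error analysis for the BDF2 Elsässer scheme, structured to mirror the stability proof so that the same BDF2 identity, algebraic add/subtract of the $\frac{\nu-\nu_m}{2}$-coupling term, and timestep condition can be recycled. First, I would split the per-ensemble errors $e_{j,v}^{n}:=v_j(t^n)-v_{j,h}^n$ and $e_{j,w}^{n}:=w_j(t^n)-w_{j,h}^n$ via a Stokes-type (or $L^2$-into-$V_h$) projection, writing $e_{j,v}^n=\eta_{j,v}^n+\phi_{j,v}^n$ with $\phi_{j,v}^n\in V_h$, and similarly for $w$. The approximation pieces $\eta_{j,v}^n,\eta_{j,w}^n$ are controlled by the estimates \eqref{AppPro1}--\eqref{AppPro4}, contributing $h^{2k}$.

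Next, I would derive the error equations by evaluating \eqref{els1}--\eqref{els2} at $t^{n+1}$, testing against $\chi_h=\phi_{j,v}^{n+1}$ and $l_h=\phi_{j,w}^{n+1}$ respectively, and subtracting \eqref{weakn1}--\eqref{weakn2}. On the left-hand side I expect (i) the BDF2 time-difference quotient acting on $\phi_{j,v}^{n+1}$, (ii) the diffusive terms, and (iii) the cross-diffusion $\tfrac{\nu-\nu_m}{2}(\nabla(2\phi_{j,w}^n-\phi_{j,w}^{n-1}),\nabla\phi_{j,v}^{n+1})$ that in the stability proof was handled by adding/subtracting $\tfrac{\nu-\nu_m}{2}(\nabla\phi_{j,v}^{n+1},\nabla\phi_{j,w}^{n+1})$. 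On the right-hand side I expect four classes of terms: the BDF2 truncation error $\tfrac{3v_j(t^{n+1})-4v_j(t^n)+v_j(t^{n-1})}{2\Delta t}-v_{j,t}(t^{n+1})=O(\Delta t^2)$ in $L^2$, bounded using $v_{j,ttt}\in L^\infty(0,T;L^2)$; the explicit extrapolation error $(2v_j^n-v_j^{n-1})-v_j(t^{n+1})=O(\Delta t^2)$ in $H^1$, using $v_{j,tt}\in L^\infty(0,T;H^1)$; the consistency of the ensemble-mean convection $\langle w\rangle^n\cdot\nabla v_j(t^{n+1})$ versus $w_j(t^{n+1})\cdot\nabla v_j(t^{n+1})$, handled by the identity $w_j=\langle w\rangle+w_j'$ and the trilinear bound \eqref{nonlinearbound}; and the interpolation residuals involving $\eta_{j,v},\eta_{j,w}$. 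Apply the $(3a-4b+c,a)$ identity \eqref{ident} as in the stability proof to turn the time quotient into telescoping $G$-norm terms plus the $\|\phi^{n+1}-2\phi^n+\phi^{n-1}\|^2$ control term that absorbs the explicit trilinear remainder under the stated timestep CFL.

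I would then add the $v$ and $w$ error equations, apply Cauchy--Schwarz and Young's inequality with weights designed so that after summing, the negative contributions $-\tfrac{\alpha}{12}\|\nabla\phi_{j,v}^{n+1}\|^2$ are absorbed into the coercive $\tfrac{\nu+\nu_m}{2}(\|\nabla\phi_{j,v}^{n+1}\|^2+\|\nabla\phi_{j,w}^{n+1}\|^2)$, leaving a clean $\tfrac{\alpha}{4}$ on the left. Sum from $n=1$ to $M-1$, drop the nonnegative telescoping differences on the left, and apply the discrete Gronwall lemma stated earlier to get a per-$j$ bound of the form $\|\phi_{j,v}^M\|^2+\|\phi_{j,w}^M\|^2+\alpha\Delta t\sum\|\nabla\phi_{j,v}^{n+1}\|^2+\cdots\le Ce^{CT/\alpha}((\nu^2+\nu_m^2+1)h^{2k}+((\nu-\nu_m)^2+1)\Delta t^4)$. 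Combining with the triangle inequality and the $\eta$-bounds gives the same estimate for $e_{j,v}^M,e_{j,w}^M$. Finally, since $\langle v\rangle(t^n)-\langle v_h\rangle^n=\tfrac1J\sum_{j=1}^J e_{j,v}^n$ (up to the mild discrepancy between the true mean and the extrapolated discrete mean, which is itself $O(\Delta t^2)$ by $v_{j,tt}$), Cauchy--Schwarz in $j$ yields the $1/J^2$ prefactor, and the $2^J$ presumably collects constants from repeatedly applying $(a+b)^2\le 2(a^2+b^2)$ across the $J$-fold ensemble manipulations.

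The main obstacle will be the nonlinear consistency term: carefully rewriting $\langle w_h\rangle^n\cdot\nabla v_{j,h}^{n+1}+w_{j,h}^{\prime n}\cdot\nabla(2v_{j,h}^n-v_{j,h}^{n-1})-w_j(t^{n+1})\cdot\nabla v_j(t^{n+1})$ as a sum of terms involving $\phi_{j,w}^n$, $\eta_{j,w}^n$, the extrapolation residual, and the mean/fluctuation decomposition, so that each piece is bounded using only already-controlled quantities and the stability bound \eqref{stability1}; in particular, the explicit term $w_{j,h}^{\prime n}\cdot\nabla(2v_{j,h}^n-v_{j,h}^{n-1})$ tested against $\phi_{j,v}^{n+1}-2\phi_{j,v}^n+\phi_{j,v}^{n-1}$ is exactly where the stated CFL $\Delta t\lesssim\alpha h^2/\max_j\|\nabla v_{j,h}^{\prime n}\|$ is needed, as in the stability analysis, and the bookkeeping required to keep all constants uniform in $j$ before averaging is where error accumulation constants (the $2^J$ factor) appear.
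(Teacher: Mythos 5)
Your proposal follows essentially the same route as the paper's proof: the same $\eta$--$\phi$ splitting via the $L^2$ projection into $V_h$, the same error equations with BDF2 truncation, extrapolation, and ensemble-mean consistency residuals, the same use of the identity \eqref{ident} and the add/subtract of the cross-diffusion term, absorption of the explicit nonlinear remainder under the stated timestep restriction, discrete Gronwall, triangle inequality, and finally averaging over $j$. The structure, the key estimates, and the provenance of the $1/J^2$ and exponential constants all match the paper's argument, so the plan is sound as written.
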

\begin{proof}
We start our proof by obtaining the error equation. Testing \eqref{els1} and \eqref{els2} with $\chi_h, l_h\in V_h$ at the time level $t^{n+1}$, the continuous variational formulations can be written as
\begin{align}
\bigg(\frac{3v_j(t^{n+1})-4v_j(t^n)+v_j(t^{n-1})}{2\Delta t},\chi_h\bigg)+(w_j(t^{n+1})\cdot\nabla v_j(t^{n+1}),\chi_h)\nonumber\\+\frac{\nu+\nu_m}{2}(\nabla v_j(t^{n+1}), \nabla\chi_h) +\frac{\nu-\nu_m}{2}(\nabla(2w_j(t^n)-w_j(t^{n-1})),\chi_h)-(q_j(t^{n+1})-\rho_{j,h},\nabla\cdot\chi_h)\nonumber\\ =(f_{1,j}(t^{n+1}),\chi_h)-\frac{\nu-\nu_m}{2}\big(\nabla( w_j(t^{n+1})-2w_j(t^n)+w_j(t^{n-1})),\chi_h\big)\nonumber\\-\bigg(v_{j,t}(t^{n+1})-\frac{3v_j(t^{n+1})-4v_j(t^n)+v_j(t^{n-1})}{2\Delta t}, \chi_h\bigg), \label{conweakn1}
\end{align}
and
\begin{align}
\bigg(\frac{3w_j(t^{n+1})-4w_j(t^n)+w_j(t^{n-1})}{2\Delta t},l_h\bigg)+(v_j(t^{n+1})\cdot\nabla w_j(t^{n+1}),l_h)\nonumber\\+\frac{\nu+\nu_m}{2}(\nabla w_j(t^{n+1}), \nabla l_h) +\frac{\nu-\nu_m}{2}(\nabla(2v_j(t^n)-v_j(t^{n-1})),l_h)-(r_j(t^{n+1})-\zeta_{j,h},\nabla\cdot l_h)\nonumber\\ =(f_{2,j}(t^{n+1}),l_h)-\frac{\nu-\nu_m}{2}\big(\nabla( v_j(t^{n+1})-2v_j(t^n)+v_j(t^{n-1})),l_h\big)\nonumber\\-\bigg(w_{j,t}(t^{n+1})-\frac{3w_j(t^{n+1})-4w_j(t^n)+w_j(t^{n-1})}{2\Delta t}, l_h\bigg). \label{conweakn2}
\end{align}
Denote $e_{v,j}^n:=v_j(t^n)-v_{j,h}^n,\hspace{2mm}e_{w,j}^n:=w_j(t^n)-w_{j,h}^n.$ Subtracting \eqref{weakn1} and \eqref{weakn2} from equation \eqref{conweakn1} and \eqref{conweakn2} respectively, yields 

\begin{align}
\bigg(&\frac{3e_{j,v}^{n+1}-4e_{j,v}^n+e_{j,v}^{n-1}}{2\Delta t},\chi_h\bigg)+\frac{\nu+\nu_m}{2}(\nabla e_{j,v}^{n+1},\nabla \chi_h)+\frac{\nu-\nu_m}{2}\big(\nabla (2e_{j,w}^n-e_{j,w}^{n-1}),\nabla\chi_h\big)\nonumber\\&+((2e_{j,w}^n-e_{j,w}^{n-1})\cdot\nabla v_j(t^{n+1}),\chi_h)+((2w_{j,h}^n-w_{j,h}^{n-1})\cdot\nabla e_{j,v}^{n+1},\chi_h)\nonumber\\&-(w_{j,h}^{'n}\cdot\nabla(e_{j,v}^{n+1}-2e_{j,v}^n+e_{j,v}^{n-1}),\chi_h)=-G_1(t,v_j,w_j,\chi_h),
\end{align}
and
\begin{align}
\bigg(&\frac{3e_{j,w}^{n+1}-4e_{j,w}^n+e_{j,w}^{n-1}}{2\Delta t},l_h\bigg)+\frac{\nu+\nu_m}{2}(\nabla e_{j,w}^{n+1},\nabla l_h)+\frac{\nu-\nu_m}{2}\big(\nabla (2e_{j,v}^n-e_{j,v}^{n-1}),\nabla l_h\big)\nonumber\\&+((2e_{j,v}^n-e_{j,v}^{n-1})\cdot\nabla w_j(t^{n+1}),l_h)+((2v_{j,h}^n-v_{j,h}^{n-1})\cdot\nabla e_{j,w}^{n+1},l_h)\nonumber\\&-(v_{j,h}^{'n}\cdot\nabla(e_{j,w}^{n+1}-2e_{j,w}^n+e_{j,w}^{n-1}),l_h)=-G_2(t,v_j,w_j,l_h),
\end{align}
where
\begin{align*}
G_1(t,v_j,w_j,\chi_h):=((w_j(t^{n+1})-2w_j(t^n)+w_j(t^{n-1}))\cdot\nabla v_j(t^{n+1}),\chi_h)\nonumber\\+(w_{j,h}^{'n}\cdot\nabla(v_j(t^{n+1})-2v_j(t^n)+v_j(t^{n-1})),\chi_h)\nonumber\\+\frac{\nu-\nu_m}{2}\big(\nabla( w_j(t^{n+1})-2w_j(t^n)+w_j(t^{n-1})),\nabla\chi_h\big)\nonumber\\+\bigg(v_{j,t}(t^{n+1})-\frac{3v_j(t^{n+1})-4v_j(t^n)+v_j(t^{n-1})}{2\Delta t}, \chi_h\bigg)
\end{align*}
and
\begin{align*}
G_2(t,v_j,w_j,l_h):=((v_j(t^{n+1})-2v_j(t^n)+v_j(t^{n-1}))\cdot\nabla w_j(t^{n+1}),l_h)\nonumber\\+(v_{j,h}^{'n}\cdot\nabla(w_j(t^{n+1})-2w_j(t^n)+w_j(t^{n-1})),l_h)\nonumber\\+\frac{\nu-\nu_m}{2}\big(\nabla( v_j(t^{n+1})-2v_j(t^n)+v_j(t^{n-1})),\nabla l_h\big)\nonumber\\+\bigg(w_{j,t}(t^{n+1})-\frac{3w_j(t^{n+1})-4w_j(t^n)+w_j(t^{n-1})}{2\Delta t}, l_h\bigg).
\end{align*}
Now we decompose the errors as
$$e_{j,v}^n: = v_j(t^n)-v_{j,h}^n=(v_j(t^n)-\tilde{v}_j^n)-(v_{j,h}^n-\tilde{v}_j^n):=\eta_{j,v}^n-\phi_{j,h}^n,$$
$$e_{j,w}^n: = w_j(t^n)-w_{j,h}^n=(w_j(t^n)-\tilde{w}_j^n)-(w_{j,h}^n-\tilde{w}_j^n):=\eta_{j,w}^n-\psi_{j,h}^n,$$

where $\tilde{v}_j^n: =P_{V_h}^{L^2}(v_j(t^n))\in V_h$ and $\tilde{w}_j^n: =P_{V_h}^{L^2}(w_j(t^n))\in V_h$ are the $L^2$ projections of $v_j(t^n)$ and $w_j(t^n)$ into $V_h$, respectively. Note that $(\eta_{j,v}^n,v_h)=(\eta_{j,w}^n,v_h)=0\hspace{2mm} \forall v_h\in V_h.$  Rewriting, we have for $\chi_h, l_h\in V_h$
\begin{align}
\bigg(\frac{3\phi_{j,h}^{n+1}-4\phi_{j,h}^n+\phi_{j,h}^{n-1}}{2\Delta t},\chi_h\bigg)+\frac{\nu+\nu_m}{2}(\nabla \phi_{j,h}^{n+1},\nabla \chi_h)+\frac{\nu-\nu_m}{2}\big(\nabla (2\psi_{j,h}^n-\psi_{j,h}^{n-1}),\nabla\chi_h\big)\nonumber\\+((2\psi_{j,h}^n-\psi_{j,h}^{n-1})\cdot\nabla v_j(t^{n+1}),\chi_h)+((2w_{j,h}^n-w_{j,h}^{n-1})\cdot\nabla\phi_{j,h}^{n+1},\chi_h)\nonumber\\-(w_{j,h}^{'n}\cdot\nabla(\phi_{j,h}^{n+1}-\phi_{j,h}^n+\phi_{j,h}^{n-1}),\chi_h)=\frac{\nu-\nu_m}{2}\big(\nabla (2\eta_{j,w}^n-\eta_{j,w}^{n-1}),\nabla\chi_h\big)\nonumber\\+\frac{\nu+\nu_m}{2}(\nabla \eta_{j,v}^{n+1},\nabla \chi_h)+((2\eta_{j,w}^n-\eta_{j,w}^{n-1})\cdot\nabla v_j(t^{n+1}),\chi_h)\nonumber\\+((2w_{j,h}^n-w_{j,h}^{n-1})\cdot\nabla\eta_{j,v}^{n+1},\chi_h)-(w_{j,h}^{'n}\cdot\nabla(\eta_{j,v}^{n+1}-\eta_{j,v}^n+\eta_{j,v}^{n-1}),\chi_h)+G_1(t,v_j,w_j,\chi_h),\label{phi2n}
\end{align}
and
\begin{align}
\bigg(\frac{3\psi_{j,h}^{n+1}-4\psi_{j,h}^n+\psi_{j,h}^{n-1}}{2\Delta t},l_h\bigg)+\frac{\nu+\nu_m}{2}(\nabla \psi_{j,h}^{n+1},\nabla l_h)+\frac{\nu-\nu_m}{2}\big(\nabla (2\phi_{j,h}^n-\phi_{j,h}^{n-1}),\nabla l_h\big)\nonumber\\+((2\phi_{j,h}^n-\phi_{j,h}^{n-1})\cdot\nabla w_j(t^{n+1}),l_h)+((2v_{j,h}^n-v_{j,h}^{n-1})\cdot\nabla\psi_{j,h}^{n+1},l_h)\nonumber\\-(v_{j,h}^{'n}\cdot\nabla(\psi_{j,h}^{n+1}-\psi_{j,h}^n+\psi_{j,h}^{n-1}),l_h)=\frac{\nu-\nu_m}{2}\big(\nabla (2\eta_{j,v}^n-\eta_{j,v}^{n-1}),\nabla l_h\big)\nonumber\\+\frac{\nu+\nu_m}{2}(\nabla \eta_{j,w}^{n+1},\nabla l_h)+((2\eta_{j,v}^n-\eta_{j,v}^{n-1})\cdot\nabla w_j(t^{n+1}),l_h)\nonumber\\+((2v_{j,h}^n-v_{j,h}^{n-1})\cdot\nabla\eta_{j,w}^{n+1},l_h)-(v_{j,h}^{'n}\cdot\nabla(\eta_{j,w}^{n+1}-\eta_{j,w}^n+\eta_{j,w}^{n-1}),l_h)+G_2(t,v_j,w_j,l_h),\label{psi2n}
\end{align}

Choose $\chi_h=\phi_{j,h}^{n+1}, l_h=\psi_{j,h}^{n+1}$ and use the identity \eqref{ident} in \eqref{phi2n} and \eqref{psi2n}, to obtain
\begin{align}
\frac{1}{4\Delta t}(\|\phi_{j,h}^{n+1}\|^2-\|\phi_{j,h}^{n}\|^2+\|2\phi_{j,h}^{n+1}-\phi_{j,h}^{n}\|^2-\|2\phi_{j,h}^{n}-\phi_{j,h}^{n-1}\|^2\nonumber\\+\|\phi_{j,h}^{n+1}-2\phi_{j,h}^{n}+\phi_{j,h}^{n-1}\|^2)+\frac{\nu+\nu_m}{2}\|\nabla \phi_{j,h}^{n+1}\|^2+\frac{\nu-\nu_m}{2}\big(\nabla (2\psi_{j,h}^n-\psi_{j,h}^{n-1}),\nabla\phi_{j,h}^{n+1}\big)\nonumber\\+((2\psi_{j,h}^n-\psi_{j,h}^{n-1})\cdot\nabla v_j(t^{n+1}),\phi_{j,h}^{n+1})-(w_{j,h}^{'n}\cdot\nabla(\phi_{j,h}^{n+1}-\phi_{j,h}^n+\phi_{j,h}^{n-1}),\phi_{j,h}^{n+1})\nonumber\\=\frac{\nu+\nu_m}{2}(\nabla \eta_{j,v}^{n+1},\nabla \phi_{j,h}^{n+1})+\frac{\nu-\nu_m}{2}\big(\nabla (2\eta_{j,w}^n-\eta_{j,w}^{n-1}),\nabla \phi_{j,h}^{n+1}\big)\nonumber\\+((2\eta_{j,w}^n-\eta_{j,w}^{n-1})\cdot\nabla v_j(t^{n+1}),\phi_{j,h}^{n+1})+((2w_{j,h}^n-w_{j,h}^{n-1})\cdot\nabla\eta_{j,v}^{n+1},\phi_{j,h}^{n+1})\nonumber\\-(w_{j,h}^{'n}\cdot\nabla(\eta_{j,v}^{n+1}-\eta_{j,v}^n+\eta_{j,v}^{n-1}),\phi_{j,h}^{n+1})+G_1(t,v_j,w_j,\phi_{j,h}^{n+1}),\label{baddddn1}
\end{align}
and
\begin{align}
\frac{1}{4\Delta t}(\|\psi_{j,h}^{n+1}\|^2-\|\psi_{j,h}^{n}\|^2+\|2\psi_{j,h}^{n+1}-\psi_{j,h}^{n}\|^2-\|2\psi_{j,h}^{n}-\psi_{j,h}^{n-1}\|^2\nonumber\\+\|\psi_{j,h}^{n+1}-2\psi_{j,h}^{n}+\psi_{j,h}^{n-1}\|^2)+\frac{\nu+\nu_m}{2}\|\nabla \psi_{j,h}^{n+1}\|^2+\frac{\nu-\nu_m}{2}\big(\nabla (2\phi_{j,h}^n-\phi_{j,h}^{n-1}),\nabla\psi_{j,h}^{n+1}\big)\nonumber\\+((2\phi_{j,h}^n-\phi_{j,h}^{n-1})\cdot\nabla w_j(t^{n+1}),\psi_{j,h}^{n+1})-(v_{j,h}^{'n}\cdot\nabla(\psi_{j,h}^{n+1}-\psi_{j,h}^n+\psi_{j,h}^{n-1}),\psi_{j,h}^{n+1})\nonumber\\=\frac{\nu+\nu_m}{2}(\nabla \eta_{j,w}^{n+1},\nabla \psi_{j,h}^{n+1})+\frac{\nu-\nu_m}{2}\big(\nabla (2\eta_{j,v}^n-\eta_{j,v}^{n-1}),\nabla \psi_{j,h}^{n+1}\big)\nonumber\\+((2\eta_{j,v}^n-\eta_{j,v}^{n-1})\cdot\nabla w_j(t^{n+1}),\psi_{j,h}^{n+1})+((2v_{j,h}^n-v_{j,h}^{n-1})\cdot\nabla\eta_{j,w}^{n+1},\psi_{j,h}^{n+1})\nonumber\\-(v_{j,h}^{'n}\cdot\nabla(\eta_{j,w}^{n+1}-\eta_{j,w}^n+\eta_{j,w}^{n-1}),\psi_{j,h}^{n+1})+G_2(t,v_j,w_j,\psi_{j,h}^{n+1}),\label{baddddn2}
\end{align}
We add equations \eqref{baddddn1} and \eqref{baddddn2}, add and subtract the term $(\nu-\nu_m)(\nabla\phi_{j,h}^{n+1},\nabla\psi_{j,h}^{n+1})$ and applying Cauchy-Schwarz results in,
\begin{align}
\frac{1}{4\Delta t}(\|\phi_{j,h}^{n+1}\|^2-\|\phi_{j,h}^{n}\|^2+\|2\phi_{j,h}^{n+1}-\phi_{j,h}^{n}\|^2-\|2\phi_{j,h}^{n}-\phi_{j,h}^{n-1}\|^2\nonumber\\+\|\psi_{j,h}^{n+1}\|^2-\|\psi_{j,h}^{n}\|^2+\|2\psi_{j,h}^{n+1}-\psi_{j,h}^{n}\|^2-\|2\psi_{j,h}^{n}-\psi_{j,h}^{n-1}\|^2\nonumber\\+\|\phi_{j,h}^{n+1}-2\phi_{j,h}^{n}+\phi_{j,h}^{n-1}\|^2+\|\psi_{j,h}^{n+1}-2\psi_{j,h}^{n}+\psi_{j,h}^{n-1}\|^2)\nonumber\\+\frac{\nu+\nu_m}{2}(\|\nabla \phi_{j,h}^{n+1}\|^2+\|\nabla \psi_{j,h}^{n+1}\|^2)\le |\nu-\nu_m|\|\nabla\phi_{j,h}^{n+1}\|\|\nabla\psi_{j,h}^{n+1}\|\nonumber\\+\frac{|\nu-\nu_m|}{2}\|\nabla( \phi_{j,h}^{n+1}-2\phi_{j,h}^n+\phi_{j,h}^{n-1})\|\|\nabla\psi_{j,h}^{n+1}\|\nonumber\\+\frac{|\nu-\nu_m|}{2}\|\nabla( \psi_{j,h}^{n+1}-2\psi_{j,h}^n+\psi_{j,h}^{n-1})\|\|\nabla\phi_{j,h}^{n+1}\|\nonumber\\+\frac{\nu+\nu_m}{2}\big(\|\nabla\eta_{j,v}^{n+1}\|\|\nabla\phi_{j,h}^{n+1}\|+\|\nabla\eta_{j,w}^{n+1}\|\|\nabla\psi_{j,h}^{n+1}\|\big)\nonumber\\+\frac{|\nu-\nu_m|}{2}\|\nabla(2\eta_{j,w}^n-\eta_{j,w}^{n-1})\|\|\nabla\phi_{j,h}^{n+1}\|+\frac{|\nu-\nu_m|}{2}\|\nabla(2\eta_{j,v}^n-\eta_{j,v}^{n-1})\|\|\nabla\psi_{j,h}^{n+1}\|\nonumber\\+|(w_{j,h}^{'n}\cdot\nabla(\phi_{j,h}^{n+1}-\phi_{j,h}^n+\phi_{j,h}^{n-1}),\phi_{j,h}^{n+1})|+|(v_{j,h}^{'n}\cdot\nabla(\psi_{j,h}^{n+1}-\psi_{j,h}^n+\psi_{j,h}^{n-1}),\psi_{j,h}^{n+1})|\nonumber\\+|((2\eta_{j,w}^n-\eta_{j,w}^{n-1})\cdot\nabla v_j(t^{n+1}),\phi_{j,h}^{n+1})|+|((2\eta_{j,v}^n-\eta_{j,v}^{n-1})\cdot\nabla w_j(t^{n+1}),\psi_{j,h}^{n+1})|\nonumber\\+|((2w_{j,h}^n-w_{j,h}^{n-1})\cdot\nabla\eta_{j,v}^{n+1},\phi_{j,h}^{n+1})|+|((2v_{j,h}^n-v_{j,h}^{n-1})\cdot\nabla\eta_{j,w}^{n+1},\psi_{j,h}^{n+1})|\nonumber\\+|(w_{j,h}^{'n}\cdot\nabla(\eta_{j,v}^{n+1}-\eta_{j,v}^n+\eta_{j,v}^{n-1}),\phi_{j,h}^{n+1})|+|(v_{j,h}^{'n}\cdot\nabla(\eta_{j,w}^{n+1}-\eta_{j,w}^n+\eta_{j,w}^{n-1}),\psi_{j,h}^{n+1})|\nonumber\\+|((2\phi_{j,h}^n-\phi_{j,h}^{n-1})\cdot\nabla w_j(t^{n+1}),\psi_{j,h}^{n+1})|+|((2\psi_{j,h}^n-\psi_{j,h}^{n-1})\cdot\nabla v_j(t^{n+1}),\phi_{j,h}^{n+1})|\nonumber\\+|G_1(t,v_j,w_j,\phi_{j,h}^{n+1})|+|G_2(t,v_j,w_j,\psi_{j,h}^{n+1})|. \label{nbound1}
\end{align}
Let us define $\alpha: = \nu+\nu_m-|\nu-\nu_m|>0$. We turn our attention to finding the bounds for the right-hand side terms in \eqref{nbound1}. Applying Cauchy-Schwarz and Young's inequalities on the first seven terms on left results in
\begin{align*}
|\nu-\nu_m|\|\nabla\phi_{j,h}^{n+1}\|\|\nabla\psi_{j,h}^{n+1}\|&\le\frac{|\nu-\nu_m|}{2}\|\nabla\phi_{j,h}^{n+1}\|^2+\frac{|\nu-\nu_m|}{2}\|\nabla\psi_{j,h}^{n+1}\|^2\\
\frac{\nu+\nu_m}{2}\|\nabla\eta_{j,v}^{n+1}\|\|\nabla\phi_{j,h}^{n+1}\|&\le\frac{\alpha}{36}\|\nabla\phi_{j,h}^{n+1}\|^2+\frac{9(\nu+\nu_m)^2}{4\alpha}\|\nabla\eta_{j,v}^{n+1}\|^2\\
\frac{\nu+\nu_m}{2}\|\nabla\eta_{j,w}^{n+1}\|\|\nabla\psi_{j,h}^{n+1}\|&\le\frac{\alpha}{36}\|\nabla\psi_{j,h}^{n+1}\|^2+\frac{9(\nu+\nu_m)^2}{4\alpha}\|\nabla\eta_{j,w}^{n+1}\|^2. 
\end{align*}
\begin{align*}
\frac{|\nu-\nu_m|}{2}&\|\nabla(\phi_{j,h}^{n+1}-2\phi_{j,h}^n+\phi_{j,h}^{n-1})\|\|\nabla\psi_{j,h}^{n+1}\|\\&\le\frac{\alpha}{36}\|\nabla\psi_{j,h}^{n+1}\|^2+\frac{9(\nu-\nu_m)^2}{4\alpha}\|\nabla(\phi_{j,h}^{n+1}-2\phi_{j,h}^n+\phi_{j,h}^{n-1})\|^2
\end{align*}
\begin{align*}
\frac{|\nu-\nu_m|}{2}&\|\nabla(\psi_{j,h}^{n+1}-2\psi_{j,h}^n+\psi_{j,h}^{n-1})\|\|\nabla\phi_{j,h}^{n+1}\|\\&\le\frac{\alpha}{36}\|\nabla\phi_{j,h}^{n+1}\|^2+\frac{9(\nu-\nu_m)^2}{4\alpha}\|\nabla(\psi_{j,h}^{n+1}-2\psi_{j,h}^n+\psi_{j,h}^{n-1})\|^2
\end{align*}
\begin{align*}
\frac{|\nu-\nu_m|}{2}\|\nabla(2\eta_{j,w}^n-\eta_{j,w}^{n-1})\|\|\nabla\phi_{j,h}^{n+1}\|&\le\frac{\alpha}{18}\|\nabla\phi_{j,h}^{n+1}\|^2+\frac{9(\nu-\nu_m)^2}{\alpha}\|\nabla\eta_{j,w}^n\|^2+\frac{9(\nu-\nu_m)^2}{4\alpha}\|\nabla\eta_{j,w}^{n-1}\|^2\\
\frac{|\nu-\nu_m|}{2}\|\nabla(2\eta_{j,v}^n-\eta_{j,v}^{n-1})\|\|\nabla\psi_{j,h}^{n+1}\|&\le\frac{\alpha}{18}\|\nabla\psi_{j,h}^{n+1}\|^2+\frac{9(\nu-\nu_m)^2}{\alpha}\|\nabla\eta_{j,v}^n\|^2+\frac{9(\nu-\nu_m)^2}{4\alpha}\|\nabla\eta_{j,v}^{n-1}\|^2
\end{align*}
Apply H\"older and Young's inequalities with \eqref{nonlinearbound} on the following eight nonlinear terms yields
\begin{align*}
|(w_{j,h}^{'n}\cdot\nabla(\phi_{j,h}^{n+1}-2\phi_{j,h}^n+\phi_{j,h}^{n-1}),\phi_{j,h}^{n+1})|&\le \frac{\alpha}{36}\|\nabla \phi_{j,h}^{n+1}\|^2+\frac{9C^2}{4\alpha}\|\nabla w_{j,h}^{'n}\|^2\|\nabla(\phi_{j,h}^{n+1}-2\phi_{j,h}^n+\phi_{j,h}^{n-1})\|^2\\
|(v_{j,h}^{'n}\cdot\nabla(\psi_{j,h}^{n+1}-2\psi_{j,h}^n+\psi_{j,h}^{n-1}),\psi_{j,h}^{n+1})|&\le \frac{\alpha}{36}\|\nabla \psi_{j,h}^{n+1}\|^2+\frac{9C^2}{4\alpha}\|\nabla v_{j,h}^{'n}\|^2\|\nabla(\psi_{j,h}^{n+1}-2\psi_{j,h}^n+\psi_{j,h}^{n-1})\|^2\\
|((2\eta_{j,w}^n-\eta_{j,w}^{n-1})\cdot\nabla v_j(t^{n+1}),\phi_{j,h}^{n+1})|&\le \frac{\alpha}{36}\|\nabla \phi_{j,h}^{n+1}\|^2+\frac{9C^2}{4\alpha}\|\nabla(2\eta_{j,w}^n-\eta_{j,w}^{n-1})\|^2\|\nabla v_j(t^{n+1})\|^2\\
|((2\eta_{j,v}^n-\eta_{j,v}^{n-1})\cdot\nabla w_j(t^{n+1}),\psi_{j,h}^{n+1})|&\le \frac{\alpha}{36}\|\nabla \psi_{j,h}^{n+1}\|^2+\frac{9C^2}{4\alpha}\|\nabla(2\eta_{j,v}^n-\eta_{j,v}^{n-1})\|^2\|\nabla w_j(t^{n+1})\|^2\\
|((2w_{j,h}^n-w_{j,h}^{n-1})\cdot\nabla\eta_{j,v}^{n+1},\phi_{j,h}^{n+1})|&\le \frac{\alpha}{36}\|\nabla \phi_{j,h}^{n+1}\|^2+\frac{9C^2}{4\alpha}\|\nabla(2w_{j,h}^n-w_{j,h}^{n-1})\|^2\|\nabla\eta_{j,v}^{n+1}\|^2\\
|((2v_{j,h}^n-v_{j,h}^{n-1})\cdot\nabla\eta_{j,w}^{n+1},\psi_{j,h}^{n+1})|&\le \frac{\alpha}{36}\|\nabla \psi_{j,h}^{n+1}\|^2+\frac{9C^2}{4\alpha}\|\nabla(2v_{j,h}^n-v_{j,h}^{n-1})\|^2\|\nabla\eta_{j,w}^{n+1}\|^2\\
|(w_{j,h}^{'n}\cdot\nabla(\eta_{j,v}^{n+1}-\eta_{j,v}^n+\eta_{j,v}^{n-1}),\phi_{j,h}^{n+1})|&\le \frac{\alpha}{36}\|\nabla \phi_{j,h}^{n+1}\|^2+\frac{9C^2}{4\alpha}\|\nabla w_{j,h}^{'n}\|^2\|\nabla(\eta_{j,v}^{n+1}-\eta_{j,v}^n+\eta_{j,v}^{n-1})\|^2\\
|(v_{j,h}^{'n}\cdot\nabla(\eta_{j,w}^{n+1}-\eta_{j,w}^n+\eta_{j,w}^{n-1}),\psi_{j,h}^{n+1})|&\le \frac{\alpha}{36}\|\nabla \psi_{j,h}^{n+1}\|^2+\frac{9C^2}{4\alpha}\|\nabla v_{j,h}^{'n}\|^2\|\nabla(\eta_{j,w}^{n+1}-\eta_{j,w}^n+\eta_{j,w}^{n-1})\|^2
\end{align*}
Apply  H\"{o}lder's inequality, Sobolev embedding theorems, Poincare's and Young's inequalities with \eqref{nonlinearbound} on the following two nonlinear terms to reveal
\begin{align*}
|((2\psi_{j,h}^n-\psi_{j,h}^{n-1})\cdot\nabla v_j(t^{n+1}),\phi_{j,h}^{n+1})|\le\frac{\alpha}{36}\|\nabla \phi_{j,h}^{n+1}\|^2+\frac{9C^2}{4\alpha}\|v_j(t^{n+1})\|_{H^2}^2\|2\psi_{j,h}^n-\psi_{j,h}^{n-1}\|^2\\
|((2\phi_{j,h}^n-\phi_{j,h}^{n-1})\cdot\nabla w_j(t^{n+1}),\psi_{j,h}^{n+1})|\le\frac{\alpha}{36}\|\nabla \psi_{j,h}^{n+1}\|^2+\frac{9C^2}{4\alpha}\|w_j(t^{n+1})\|_{H^2}^2\|2\phi_{j,h}^n-\phi_{j,h}^{n-1}\|^2
\end{align*}
Using Taylor's series, Cauchy-Schwarz, Poincare's and Young's inequalities the last two terms are evaluated as
\begin{align*}
|G_1(t,v_j,w_j,\phi_{j,h}^{n+1})|\le\frac{\alpha}{36}\|\nabla\phi_{j,h}^{n+1}\|^2+\frac{36C^2}{\alpha}\Delta t^4\|\nabla w_{j,tt}(s^*)\|^2\|\nabla v_j(t^{n+1})\|^2\\+\frac{36C^2}{\alpha} \Delta t^4\|\nabla w_{j,h}^{'n}\|^2\|\nabla v_{j,tt}(s^{**})\|^2+\frac{9(\nu-\nu_m)^2}{\alpha}\Delta t^4\|\nabla w_{j,tt}(s^{*})\|^2+\frac{4C^2}{\alpha}\Delta t^4\|v_{j,ttt}(s^{***})\|^2
\end{align*}
\begin{align*}
|G_2(t,v_j,w_j,\psi_{j,h}^{n+1})|\le\frac{\alpha}{36}\|\nabla\psi_{j,h}^{n+1}\|^2+\frac{36C^2}{\alpha}\Delta t^4\|\nabla v_{j,tt}(t^*)\|^2\|\nabla w_j(t^{n+1})\|^2\\+\frac{36C^2}{\alpha} \Delta t^4\|\nabla v_{j,h}^{'n}\|^2\|\nabla w_{j,tt}(t^{**})\|^2+\frac{9(\nu-\nu_m)^2}{\alpha}\Delta t^4\|\nabla v_{j,tt}(t^{*})\|^2+\frac{4C^2}{\alpha}\Delta t^4\|w_{j,ttt}(t^{***})\|^2
\end{align*}
with $s^{*}, s^{**}, s^{***}, t^{*}, t^{**}, t^{***}\in[t^{n-1},t^{n+1}]$. Using these estimates in \eqref{nbound1} and reducing produces
\begin{align}
\frac{1}{4\Delta t}(\|\phi_{j,h}^{n+1}\|^2-\|\phi_{j,h}^{n}\|^2+\|2\phi_{j,h}^{n+1}-\phi_{j,h}^{n}\|^2-\|2\phi_{j,h}^{n}-\phi_{j,h}^{n-1}\|^2\nonumber\\+\|\psi_{j,h}^{n+1}\|^2-\|\psi_{j,h}^{n}\|^2+\|2\psi_{j,h}^{n+1}-\psi_{j,h}^{n}\|^2-\|2\psi_{j,h}^{n}-\psi_{j,h}^{n-1}\|^2)\nonumber\\+\big[\frac{1}{4\Delta t}-\frac{9C_i(\nu-\nu_m)^2+9C_iC^2\|\nabla w_{j,h}^{'n}\|^2}{4\alpha h^2}\big]\|\phi_{j,h}^{n+1}-2\phi_{j,h}^{n}+\phi_{j,h}^{n-1}\|^2\nonumber\\+\big[\frac{1}{4\Delta t}-\frac{9C_i(\nu-\nu_m)^2+9C_iC^2\|\nabla v_{j,h}^{'n}\|^2}{4\alpha h^2}\big]\|\psi_{j,h}^{n+1}-2\psi_{j,h}^{n}+\psi_{j,h}^{n-1}\|^2\nonumber\\+\frac{\alpha}{4}(\|\nabla \phi_{j,h}^{n+1}\|^2+\|\nabla \psi_{j,h}^{n+1}\|^2)\le\frac{9(\nu+\nu_m)^2}{4\alpha}(\|\nabla\eta_{j,v}^{n+1}\|^2+\|\nabla\eta_{j,w}^{n+1}\|^2)\nonumber\\+\frac{9(\nu-\nu_m)^2}{\alpha}(\|\nabla\eta_{j,v}^{n}\|^2+\|\nabla\eta_{j,w}^{n}\|^2)+\frac{9(\nu-\nu_m)^2}{4\alpha}(\|\nabla\eta_{j,v}^{n-1}\|^2+\|\nabla\eta_{j,w}^{n-1}\|^2)\nonumber\\+\frac{9C^2}{4\alpha}\|\nabla(2\eta_{j,w}^n-\eta_{j,w}^{n-1})\|^2\|\nabla v_j(t^{n+1})\|^2+\frac{9C^2}{4\alpha}\|\nabla(2\eta_{j,v}^n-\eta_{j,v}^{n-1})\|^2\|\nabla w_j(t^{n+1})\|^2\nonumber\\+\frac{9C^2}{4\alpha}\|\nabla(2w_{j,h}^n-w_{j,h}^{n-1})\|^2\|\nabla\eta_{j,v}^{n+1}\|^2+\frac{9C^2}{4\alpha}\|\nabla(2v_{j,h}^n-v_{j,h}^{n-1})\|^2\|\nabla\eta_{j,w}^{n+1}\|^2\nonumber\\+\frac{9C^2}{4\alpha}\|\nabla w_{j,h}^{'n}\|^2\|\nabla(\eta_{j,v}^{n+1}-\eta_{j,v}^n+\eta_{j,v}^{n-1})\|^2+\frac{9C^2}{4\alpha}\|\nabla v_{j,h}^{'n}\|^2\|\nabla(\eta_{j,w}^{n+1}-\eta_{j,w}^n+\eta_{j,w}^{n-1})\|^2\nonumber\\+\frac{9C^2}{4\alpha}\|v_j(t^{n+1})\|_{H^2}^2\|2\psi_{j,h}^n-\psi_{j,h}^{n-1}\|^2+\frac{9C^2}{4\alpha}\|w_j(t^{n+1})\|_{H^2}^2\|2\phi_{j,h}^n-\phi_{j,h}^{n-1}\|^2\nonumber\\+\frac{36C^2}{\alpha}\Delta t^4\|\nabla w_{j,tt}(s^*)\|^2\|\nabla v_j(t^{n+1})\|^2+\frac{36C^2}{\alpha} \Delta t^4\|\nabla w_{j,h}^{'n}\|^2\|\nabla v_{j,tt}(s^{**})\|^2\nonumber\\+\frac{9(\nu-\nu_m)^2}{\alpha}\Delta t^4\|\nabla w_{j,tt}(s^{*})\|^2+\frac{4C^2}{\alpha}\Delta t^4\|v_{j,ttt}(s^{***})\|^2\nonumber\\+\frac{36C^2}{\alpha}\Delta t^4\|\nabla v_{j,tt}(t^*)\|^2\|\nabla w_j(t^{n+1})\|^2+\frac{36C^2}{\alpha} \Delta t^4\|\nabla v_{j,h}^{'n}\|^2\|\nabla w_{j,tt}(t^{**})\|^2\nonumber\\+\frac{9(\nu-\nu_m)^2}{\alpha}\Delta t^4\|\nabla v_{j,tt}(t^{*})\|^2+\frac{4C^2}{\alpha}\Delta t^4\|w_{j,ttt}(t^{***})\|^2
\end{align}
To make the third and fourth terms non-negative, we choose $\Delta t\le \frac{\alpha h^2}{9C_i(\nu-\nu_m)^2+9C_iC^2\max\big\{\|\nabla v_{j,h}^{'n}\|,\|\nabla w_{j,h}^{'n}\|\big\}}$. Drop the non-negative terms on the left-hand side, multiply both sides by $4\Delta t$, use the regularity assumption, $\|\phi_{j,h}^0\|=\|\psi_{j,h}^0\|=\|\phi_{j,h}^1\|=\|\psi_{j,h}^1\|=0$, $\Delta t M=T$, and sum over the time steps to find
\begin{align}
\|\phi_{j,h}^M&\|^2+\|2\phi_{j,h}^M-\phi_{j,h}^{M-1}\|^2+\|\psi_{j,h}^M\|^2+\|2\psi_{j,h}^M-\psi_{j,h}^{M-1}\|^2\nonumber\\&+\alpha\Delta t\sum\limits_{n=2}^{M}(\|\nabla\phi_{j,h}^n\|^2+\|\nabla\psi_{j,h}^n\|^2)\le C\frac{9(\nu+\nu_m)^2}{\alpha}\Delta  t\sum\limits_{n=2}^{M}(\|\nabla\eta_{j,v}^{n}\|^2+\|\nabla\eta_{j,w}^{n}\|^2)\nonumber\\&+\frac{36(\nu-\nu_m)^2}{\alpha}\Delta t\sum\limits_{n=1}^{M-1}(\|\nabla\eta_{j,v}^{n}\|^2+\|\nabla\eta_{j,w}^{n}\|^2)+\frac{9(\nu-\nu_m)^2}{\alpha}\Delta t\sum\limits_{n=1}^{M-1}(\|\nabla\eta_{j,v}^{n-1}\|^2+\|\nabla\eta_{j,w}^{n-1}\|^2)\nonumber\\&+\frac{9C^2}{\alpha}\Delta t\sum\limits_{n=1}^{M-1}\|\nabla(2\eta_{j,w}^n-\eta_{j,w}^{n-1})\|^2\|\nabla v_j(t^{n+1})\|^2+\frac{9C^2}{\alpha}\Delta t\sum\limits_{n=1}^{M-1}\|\nabla(2\eta_{j,v}^n-\eta_{j,v}^{n-1})\|^2\|\nabla w_j(t^{n+1})\|^2\nonumber\\&+\frac{9C^2}{\alpha}\Delta t\sum\limits_{n=1}^{M-1}\|\nabla(2w_{j,h}^n-w_{j,h}^{n-1})\|^2\|\nabla\eta_{j,v}^{n+1}\|^2+\frac{9C^2}{\alpha}\Delta t\sum\limits_{n=1}^{M-1}\|\nabla(2v_{j,h}^n-v_{j,h}^{n-1})\|^2\|\nabla\eta_{j,w}^{n+1}\|^2\nonumber\\&+\frac{9C^2}{\alpha}\Delta t\sum\limits_{n=1}^{M-1}\|\nabla w_{j,h}^{'n}\|^2\|\nabla(\eta_{j,v}^{n+1}-\eta_{j,v}^n+\eta_{j,v}^{n-1})\|^2\nonumber\\&+\frac{9C^2}{\alpha}\Delta t\sum\limits_{n=1}^{M-1}\|\nabla v_{j,h}^{'n}\|^2\|\nabla(\eta_{j,w}^{n+1}-\eta_{j,w}^n+\eta_{j,w}^{n-1})\|^2\nonumber\\&+\frac{9C^2}{\alpha}\Delta t\sum\limits_{n=1}^{M-1}\|v_j(t^{n+1})\|_{H^2}^2\|2\psi_{j,h}^n-\psi_{j,h}^{n-1}\|^2+\frac{9C^2}{\alpha}\Delta t\sum\limits_{n=1}^{M-1}\|w_j(t^{n+1})\|_{H^2}^2\|2\phi_{j,h}^n-\phi_{j,h}^{n-1}\|^2\nonumber\\&+\frac{144C^2}{\alpha} \Delta t^4\Delta t\sum\limits_{n=1}^{M-1}(\|\nabla w_{j,h}^{'n}\|^2\|\nabla v_{j,tt}(t^{**})\|^2+\|\nabla v_{j,h}^{'n}\|^2\|\nabla w_{j,tt}(t^{**})\|^2)\nonumber\\&+\frac{144C^2}{\alpha}\Delta t^4\Delta t\sum\limits_{n=1}^{M-1}(\|\nabla w_{j,tt}(s^*)\|^2\|\nabla v_j(t^{n+1})\|^2+\|\nabla v_{j,tt}(t^*)\|^2\|\nabla w_j(t^{n+1})\|^2)\nonumber\\&+\frac{9(\nu-\nu_m)^2}{\alpha}\Delta t^4\Delta t\sum\limits_{n=1}^{M-1}(\|\nabla w_{j,tt}(s^{*})\|^2+\|\nabla v_{j,tt}(t^{*})\|^2)\nonumber\\&+\frac{16C^2}{\alpha}\Delta t^4\Delta t\sum\limits_{n=1}^{M-1}(\|w_{j,ttt}(t^{***})\|^2+\|v_{j,ttt}(s^{***})\|^2)
\end{align}

Applying the regularity assumptions, stability bound, interpolation estimates for $v_j$, $w_j$
\begin{align}
&\|\phi_{j,h}^M\|^2+\|2\phi_{j,h}^M-\phi_{j,h}^{M-1}\|^2+\|\psi_{j,h}^M\|^2+\|2\psi_{j,h}^M-\psi_{j,h}^{M-1}\|^2+\alpha\Delta t\sum\limits_{n=2}^{M}(\|\nabla\phi_{j,h}^n\|^2+\|\nabla\psi_{j,h}^n\|^2)\nonumber\\&\le C\frac{9(\nu+\nu_m)^2}{\alpha}h^{2k}+C\frac{45(\nu-\nu_m)^2}{\alpha}h^{2k}+\frac{9C}{\alpha}\Delta t\sum\limits_{n=1}^{M-1}(\|2\phi_{j,h}^n-\phi_{j,h}^{n-1}\|^2+\|2\psi_{j,h}^n-\psi_{j,h}^{n-1}\|^2)\nonumber\\&+\frac{9C}{\alpha}h^{2k}+\frac{304C}{\alpha}\Delta t^4 +\frac{9C(\nu-\nu_m)^2}{\alpha}\Delta t^4
\end{align}
Applying the discrete Gronwall lemma, we have
\begin{align}
\|\phi_{j,h}^M\|^2+&\|2\phi_{j,h}^M-\phi_{j,h}^{M-1}\|^2+\|\psi_{j,h}^M\|^2+\|2\psi_{j,h}^M-\psi_{j,h}^{M-1}\|^2+\alpha\Delta t\sum\limits_{n=2}^{M}(\|\nabla\phi_{j,h}^n\|^2+\|\nabla\psi_{j,h}^n\|^2)\nonumber\\&\le \frac{C}{\alpha} e^{\frac{9TC}{\alpha}}((\nu^2+\nu_m^2+1)h^{2k}+((\nu-\nu_m)^2+1)\Delta t^4)
\end{align}
Using the triangular inequality allows us to write
\begin{align}
&\|e_{j,v}^M\|^2+\|e_{j,w}^M\|^2+\alpha\Delta t\sum\limits_{n=2}^{M}(\|\nabla e_{j,v}^n\|^2+\|\nabla e_{j,w}^n\|^2)\le 2\big(\|\phi_{j,h}^M\|^2+\|\psi_{j,h}^M\|^2\nonumber\\&+\alpha\Delta t\sum\limits_{n=2}^{M}(\|\nabla\phi_{j,h}^n\|^2+\|\nabla\psi_{j,h}^n\|^2)+\|\eta_{j,v}^M\|^2+\|\eta_{j,w}^M\|^2+\alpha\Delta t\sum\limits_{n=2}^{M}(\|\nabla\eta_{j,v}^n\|^2+\|\nabla\eta_{j,w}^n\|^2)\big)\nonumber\\&\le\frac{C}{\alpha} e^{\frac{9TC}{\alpha}}((\nu^2+\nu_m^2+1)h^{2k}+((\nu-\nu_m)^2+1)\Delta t^4)+Ch^{2k+2}
\end{align}
Now summing over $j$ and using the triangular inequality completes the proof.
\end{proof}
\section{\large Numerical Experiments:}
To test the proposed algorithm \eqref{Algn1} and theory, in this section we present results of numerical experiments. In all experiments, we used $((Q_2)^2,Q_1, (Q_2)^2, Q_1)$ Taylor Hood finite elements on regular quadrilateral meshes and open source finite element library DealII\cite{dealII85}.
\subsection{\small Convergence Rate Verification:}
To verify the predicted convergence rates of our analysis in section \ref{ErrorAnalysis}, we begin this experiment with a manufactured analytical solution,
\[
{v}=\left(\begin{array}{c} \cos y+(1+t)\sin y \\ \sin x+(1+t)\cos x \end{array} \right), \
{w}=\left(\begin{array}{c} \cos y-(1+t)\sin y \\ \sin x-(1+t)\cos x \end{array} \right), \ p =(x-y)(1+t), \ \lambda=0,
\]
on the domain $\Omega = (0,1)^2$.
Next, to create four different true solutions, we perturb the above solution introducing a parameter $\epsilon$ and defining as follows: $v_j:=\begin{cases} 
      (1+(-1)^{j-1}\epsilon)v & 1\le j<3 \\
      (1+(-1)^{j-1}2\epsilon)v & 3\le j\le 4
   \end{cases}$, similarly for $w_j$, where $j\in\mathbb{N}$. Using these perturbed solutions, we compute right-hand side forcing terms. We consider the initial conditions $v_j(0)$ and $w_j(0)$. On the boundary of the unit square, Dirichlet conditions are used. The algorithm \ref{Algn1} computes the discrete ensemble average $<v_h^n>$ and $<w_h^n>$, and these will be used to compare to the true average $<v(t^n)>$ and $<w(t^n)>$ respectively. We notate the ensemble average error as $<e_u>:=<u_h>^n-<u(t^n)>$. For our choice of elements, the theory predicts the $L^2(0, T;H^1(\Omega)^d)$ error to be $O(h^2+\Delta t^2)$ provided $\Delta t<O(h^2)$. We consider three different choices $\epsilon=10^{-3},10^{-2}\text{ } \text{and}\text{ }10^{-1}$ for the perturbation parameter herein and end time $T=0.001$ for this test. For these choice of $\epsilon$, Tables \ref{convergence1}-\ref{convergence2} exhibit errors and convergence rates, and we observe second order convergence of our scheme.
\begin{table}[h!]
  \begin{center}
  \scriptsize\begin{tabular}{|c|c|c|c|c|c|c|c|}\hline
    \multicolumn{2}{|c|}{}&\multicolumn{2}{|c|}{$\epsilon=0.001$}
    & \multicolumn{2}{|c|}{$\epsilon=0.01$}& \multicolumn{2}{|c|}{$\epsilon=0.1$} \\ \hline
   $h$ &$\Delta t$ & $\|<e_v>\|_{2,1}$ & rate   &$\|<e_v>\|_{2,1}$  & rate & $\|<e_v>\|_{2,1}$ & rate  \\ \hline
   $\frac{1}{2}$  & $\frac{T}{4}$ & 3.650e-4 &      & 3.64973e-4 &      & 3.64973e-4 & \\ \hline
   $\frac{1}{4}$  & $\frac{T}{8}$ & 1.008e-4 & 1.86 & 1.00764e-4 & 1.86 & 1.00764e-4  & 1.86         \\ \hline
   $\frac{1}{8}$  & $\frac{T}{16}$& 2.621e-5 & 1.94 & 2.62134e-5 & 1.94 & 2.62134e-5  & 1.94         \\ \hline
   $\frac{1}{16}$ & $\frac{T}{32}$& 6.670e-6 & 1.97 & 6.67033e-6 & 1.97 & 6.67034e-6  & 1.97   \\ \hline
   $\frac{1}{32}$ & $\frac{T}{64}$& 1.683e-6 & 1.99 & 1.69718e-6 & 1.97 & 1.72669e-6  & 1.95   \\ \hline
     \end{tabular}
  \end{center}
  \caption{\label{convergence1}\footnotesize Error and  convergence rates for $v$ with $\nu=0.01$, $\nu_m=0.001$.}
  \end{table}
  
  \begin{table}[h!]
    \begin{center}
    \scriptsize\begin{tabular}{|c|c|c|c|c|c|c|c|}\hline
      \multicolumn{2}{|c|}{ } &\multicolumn{2}{|c|}{$\epsilon=0.001$}
      & \multicolumn{2}{|c|}{$\epsilon=0.01$}& \multicolumn{2}{|c|}{$\epsilon=0.1$} \\ \hline
     $h$ &$\Delta t$ & $\|<e_w>\|_{2,1}$  & rate   &$\|<e_w>\|_{2,1}$  & rate & $\|<e_w>\|_{2,1}$ & rate  \\ \hline
     $\frac{1}{2}$  & $\frac{T}{4}$ & 7.168e-4 &      & 7.168e-4&      & 7.168e-4 & \\ \hline
     $\frac{1}{4}$  & $\frac{T}{8}$ & 1.930e-4 & 1.89 & 1.930e-4& 1.89 & 1.930e-4 & 1.89         \\ \hline
     $\frac{1}{8}$  & $\frac{T}{16}$& 4.992e-5 & 1.95 & 4.992e-5& 1.95 & 4.992e-5 & 1.95         \\ \hline
     $\frac{1}{16}$ & $\frac{T}{32}$& 1.268e-5 & 1.98 & 1.268e-5& 1.98 & 1.268e-5 & 1.98   \\ \hline
     $\frac{1}{32}$ & $\frac{T}{64}$& 3.196e-6 & 1.99 & 3.197e-6& 1.99 & 3.197e-6 & 1.99   \\ \hline
     
     \end{tabular}
    \end{center}
    \caption{\label{convergence2}\footnotesize Error and  convergence rates for $w$ with $\nu=0.01$, $\nu_m=0.001$.}
    \end{table}

\subsection{\small MHD Channel Flow over a Step:}

Next, we consider a domain which is a $40\times 10$ rectangular channel with a $1\times 1$ step five units away from the inlet into the channel. No slip boundary condition is prescribed for the velocity and $B=<0,1>^T$  is enforced for the magnetic field on the walls and step, $u=<y(10-y)/25, 0>^T$ and $B=<0,1>^T$ at the inlet and outlet. 


An ensemble of four different solutions with the corresponding perturbed initial conditions $u_j(0)$ and $B_j(0)$ and perturbed inflow and outflow are considered. As we used second order BDF-2 scheme to approximate time derivative, we used backward-Euler method at the first time step to get the second initial condition. A mesh of the domain with $44k$ velocity degrees of freedom is shown in figure \ref{mesh}. The simulations of the algorithm \ref{Algn1} are done with the various values of $\epsilon$.

\begin{figure}[h!]
\begin{center}\vspace{-3in}
			\includegraphics[width = 1.25\textwidth, height=.65\textwidth,viewport=-22 0 500 260, clip]{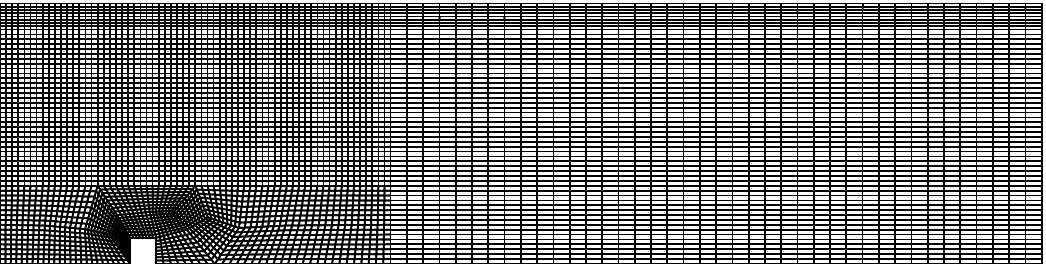}
	 	 	\caption{
			Mesh for the channel flow with a step example.
			}\label{mesh}
\end{center}
\end{figure}

\begin{figure}[h!]
\begin{center}
			\includegraphics[width = 1.\textwidth, height=.25\textwidth,clip]{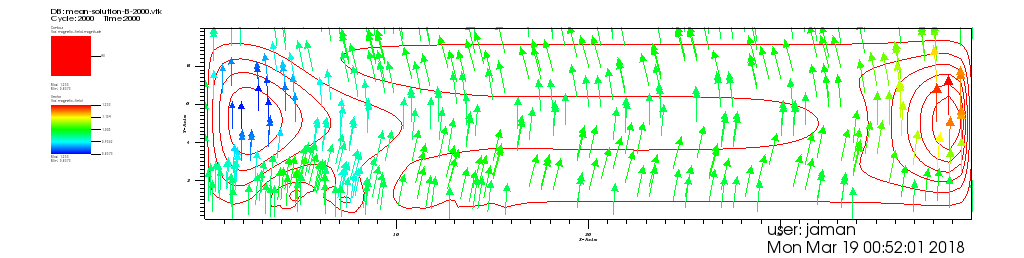}
	 	 	\caption{Shown above is $T=2$, ensemble magnetic field solution and magnetic field contour for MHD channel flow over a step with $\Delta t=0.001$, $\epsilon=0.001$, $\nu=0.001$ and $\nu_m=1$.
			}
\end{center}
\end{figure}

\begin{figure}[h!]
\begin{center}
			\includegraphics[width = 1.\textwidth, height=.25\textwidth,clip]{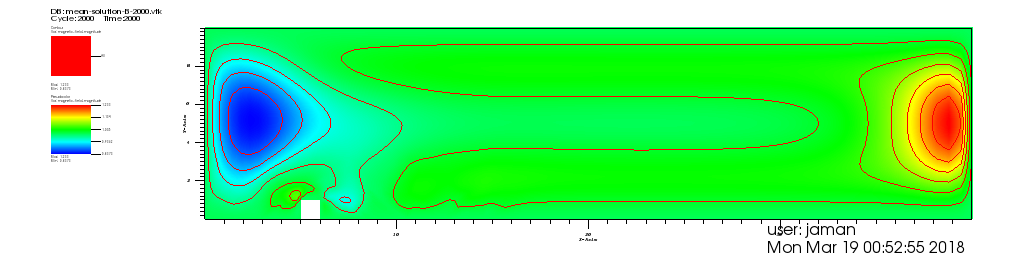}
	 	 	\caption{Shown above is $T=2$, magnitudes of ensemble magnetic field solutions (magnetic) for MHD channel flow over a step with $\Delta t=0.001$, $\epsilon=0.001$, $\nu=0.001$, and $\nu_m=1$.
			}
\end{center}
\end{figure}

\begin{figure}[h!]
\begin{center}
			\includegraphics[width = 1.\textwidth, height=.25\textwidth,clip]{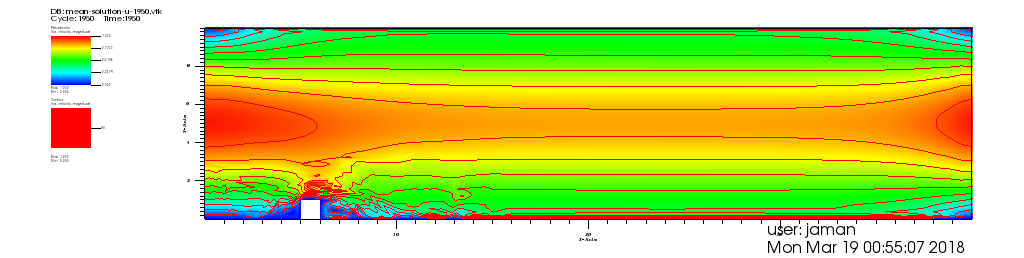}
	 	 	\caption{Shown above is $T=2$, velocity ensemble solutions (shown as streamlines over speed contours) for MHD channel flow over a step with $\Delta t=0.001$, $\epsilon=0.001$, $\nu=0.001$ and $\nu_m=1$.
			}
\end{center}
\end{figure}

\section{Conclusion:}
This paper represents an efficient second order method for computing MHD flow ensemble with noisy input data. The algorithm combines the breakthrough idea of Trenchea \cite{T14} to present a decoupled stable scheme in terms of Els\"asser variables and  the breakthrough idea for efficient computation of flow ensemble for Navier-Stokes \cite{JL14} and extends it to MHD. This work is also an extension of the author's first order accurate work \cite{MR17} for computing MHD flow ensemble. The key features to the efficiency of the algorithm are
(i) it is second order accurate stable decoupled method-split into two Oseen problems, which are much easier to solve and can be solved simultaneously (ii) at each time step, all $J$ different linear systems share the same coefficient matrix, as a result storage requirement is reduced, a single assembly of the coefficient matrix is required instead of $J$ times, preconditioners need to build once and can be reused.

We proved the stability and second order convergence of the algorithm with respect to the time size, which is an improvement from the author's earlier work of a first order scheme for computing MHD flow ensemble. The couple MHD system is split into two Oseen sub-problems at each time step where in the schemes the nonlinearities are treated explicitly at each time step.  Numerical experiments were done on a unit square with a manufactured solution that verified the predicted convergence rates. Finally, we applied our scheme on a benchmark channel flow over a step problem and showed the method performed well.

Reduced order modeling (ROM) for the ensemble MHD flow computation will be the future work. Recently, it has been shown the data-driven filtered ROM for flow problem \cite{XMRI17} works well for the complex system. To reduced computation cost further to simulate ensemble MHD system as well as more accurate results, it is worth exploring in ROM with physically accurate data.

\bibliographystyle{plain}
\bibliography{Second_order_ensembleMHD_Mohebujjaman}
\end{document}